\newtheorem{Theorem}{Theorem}[section]
\newtheorem{Proposition}[Theorem]{Proposition}
\newtheorem{Lemma}[Theorem]{Lemma}
\newtheorem{Corollary}[Theorem]{Corollary}
\theoremstyle{definition}
\newtheorem{Definition}[Theorem]{Definition}
\newtheorem{Remark}[Theorem]{Remark}
\newcommand{\bTheorem}[1]{
	\begin{Theorem} \label{T#1} }
	\newcommand{\eT}{\end{Theorem}}
\newcommand{\bProposition}[1]{
	\begin{Proposition} \label{P#1}}
	\newcommand{\eP}{\end{Proposition}}
\newcommand{\bLemma}[1]{
	\begin{Lemma} \label{L#1} }
	\newcommand{\eL}{\end{Lemma}}
\newcommand{\bCorollary}[1]{
	\begin{Corollary} \label{C#1} }
	\newcommand{\eC}{\end{Corollary}}
\newcommand{\bRemark}[1]{
	\begin{Remark} \label{R#1} }
	\newcommand{\eR}{\end{Remark}}
\newcommand{\bDefinition}[1]{
	\begin{Definition} \label{D#1} }
	\newcommand{\eD}{\end{Definition}}
\newcommand{\tvB}{\widetilde{\vB}}
\newcommand{\nb}{\| ({\rm data}) \|}
\newcommand{\Del}{\Delta_x}
\newcommand{\bB}{\vc{B}_B}
\newcommand{\vB}{\vc{B}}
\newcommand{\bfomega}{\boldsymbol{\omega}}
\newcommand{\bfphi}{\boldsymbol{\varphi}}
\newcommand{\bFormula}[1]{
	\begin{equation} \label{#1}}
	\newcommand{\eF}{\end{equation}}
\newcommand{\vrn}{\vr_n}
\newcommand{\vun}{\vu_n}
\newcommand{\vtn}{\vt_n}
\newcommand{\Ov}[1]{\overline{#1}}
\newcommand{\Curl}{{\bf curl}_x}
\newcommand{\DC}{C^\infty_c}
\newcommand{\aleq}{\stackrel{<}{\sim}}
\newcommand{\ageq}{\stackrel{>}{\sim}}
\newcommand{\vr}{\varrho}
\newcommand{\tvt}{\tilde \vt}
\newcommand{\vt}{\vartheta}
\newcommand{\vu}{\vc{u}}
\newcommand{\vc}[1]{{\bf #1}}
\newcommand{\Div}{{\rm div}_x}
\newcommand{\Grad}{\nabla_x}
\newcommand{\dx}{\,{\rm d} {x}}
\newcommand{\dt}{\,{\rm d} t }
\newcommand{\intO}[1]{\int_{\Omega} #1 \ \dx}
\newcommand{\D}{{\rm d}}
\newcommand{\vtB}{\vt_B}
\newcommand{\br}{ \nonumber \\ }
\def\softd{{\leavevmode\setbox1=\hbox{d}%
		\hbox to 1.05\wd1{d\kern-0.4ex{\char039}\hss}}}
\definecolor{Cgrey}{rgb}{0.85,0.85,0.85}
\definecolor{Cblue}{rgb}{0.50,0.85,0.85}
\definecolor{Cred}{rgb}{1,0,0}
\definecolor{fancy}{rgb}{0.10,0.85,0.10}
\newcommand\Cbox[2]{%
	\newbox\contentbox%
	\newbox\bkgdbox%
	\setbox\contentbox\hbox to \hsize{%
		\vtop{
			\kern\columnsep
			\hbox to \hsize{%
				\kern\columnsep%
				\advance\hsize by -2\columnsep%
				\setlength{\textwidth}{\hsize}%
				\vbox{
					\parskip=\baselineskip
					\parindent=0bp
					#2
				}%
				\kern\columnsep%
			}%
			\kern\columnsep%
		}%
	}%
	\setbox\bkgdbox\vbox{
		\color{#1}
		\hrule width  \wd\contentbox %
		height \ht\contentbox %
		depth  \dp\contentbox
		\color{black}
	}%
	\wd\bkgdbox=0bp%
	\vbox{\hbox to \hsize{\box\bkgdbox\box\contentbox}}%
	\vskip\baselineskip%
}
\begin{document}


\title{Compressible magnetohydrodynamics as a dissipative system}

\author{Jan B\v rezina \and Eduard Feireisl
		\thanks{The work of E.F. was partially supported by the
			Czech Sciences Foundation (GA\v CR), Grant Agreement
			21--02411S. The Institute of Mathematics of the Academy of Sciences of
			the Czech Republic is supported by RVO:67985840. }}

\date{}

\maketitle

\centerline{Faculty of Arts and Science, Kyushu University}

\centerline{744 Motooka, Nishi-ku, Fukuoka, 819-0395, Japan}

\medskip

\centerline{Institute of Mathematics of the Academy of Sciences of the Czech Republic}

\centerline{\v Zitn\' a 25, CZ-115 67 Praha 1, Czech Republic}

\begin{abstract}
	
We consider the complete system of equations governing the motion of a general 
compressible, viscous, electrically and heat conductive fluid driven by non--conservative boundary 
conditions. We show the existence of a bounded absorbing set in the energy space and asymptotic 
compactness of trajectories. As a corollary, the set of all entire globally bounded solutions is identified 
as a natural attractor. Examples of boundary conditions giving rise to unbounded solutions are also discussed.	
	
\end{abstract}


{\bf Keywords:} compressible MHD system, Levinson dissipativity, asymptotic compactness, attractor


\section{Introduction}
\label{i}

Dissipative systems are thermodynamically open systems often in the out of equilibrium regime. 
They are examples of the most interesting real world phenomena developing patterns, structures, or behaviours that they did not have when first formed. The dissipative systems exchange energy and matter with their environment, 
while dissipating mechanical and other forms of energy in accordance with the Second law of thermodynamics. 
In continuum fluid mechanics, dissipative systems are modelled by a family of field equations accompanied 
with \emph{inhomogeneous} boundary conditions. 

Many spectacular events occurring in geophysics or astrophysics, among which the dynamics of the solar convection 
zone, are attributed to the action of the magnetic field, see e.g. Thompson 
and Christensen-{D}alsgaard \cite{ThoChDa}. As direct measurements and data collecting  are often hampered by obvious 
technical limitations, reliable mathematical models are indispensable for understanding the observed phenomena.
As pointed out by Gough \cite{Gough}, a correct choice of boundary conditions plays a crucial role.

The state of a viscous, compressible, electrically and heat conducting fluid is characterized 
by its mass density $\vr = \vr(t,x)$, the (absolute) temperature $\vt = \vt(t,x)$, the velocity 
$\vu = \vu(t,x)$ and the magnetic field $\vB = \vB (t,x)$. The time evolution is governed 
by the system of field equations of \emph{compressible magnetohydrodynamics (MHD)}, see e.g. Weiss and Proctor 
\cite{Weiss}:

\begin{mdframed}[style=MyFrame]
	
	{\bf Equation of continuity:}
	\begin{equation} \label{p1}
		\partial_t \vr + \Div (\vr \vu) = 0.
	\end{equation}
	
	\noindent	{\bf Momentum equation:}
	\begin{equation} \label{p2}
		\partial_t (\vr \vu) + \Div (\vr \vu \otimes \vu) + (\bfomega \times \vr \vu)  + \Grad p (\vr, \vt) = \Div \mathbb{S}(\vt, \Grad \vu) + \Curl  \vc{B} \times \vc{B} +   \vr \Grad M.
	\end{equation}

\noindent
{\bf Induction equation:}
\begin{equation} \label{p3} 
	\partial_t \vc{B} + \Curl (\vc{B} \times \vu ) + \Curl (\zeta (\vt) \Curl  \vc{B} ) = 0,\
	\Div \vc{B} = 0.
	\end{equation}

	\noindent {\bf Internal energy balance:}
	\begin{align}
		\partial_t ( \vr e(\vr, \vt) ) + \Div (\vr e(\vr, \vt) \vu) &+ \Div \vc{q}(\vt, \Grad \vt) \br &=
		\mathbb{S}(\vt, \Grad \vu):\Grad \vu + \zeta(\vt) |\Curl \vB|^2  - p(\vr, \vt) \Div \vu.
		\label{p4}
	\end{align}	

\end{mdframed}

The system is written in a general \emph{rotating frame} 
frequently used in astro/geophysics, where $\bfomega \times \vr \vu$ is the Coriolis force 
and the potential $M$, 
\[
M = G + \frac{1}{2}|\bfomega \times x|^2,     
\]
includes the gravitational component $G = G(t,x)$ as well as the centrifugal force. Note that $G$ may depend on the 
time $t$ in the rotating frame if the source of gravitation is located outside the fluid domain.

We consider a \emph{Newtonian fluid}, 
with the viscous stress tensor
\begin{equation} \label{w8}
	\mathbb{S}(\vt, \Grad \vu) = \mu (\vt) \left( \Grad \vu + \Grad^t \vu - \frac{2}{3} \Div \vu \mathbb{I} \right) +
	\eta(\vt) \Div \vu \mathbb{I},
\end{equation}
where the viscosity coefficients $\mu > 0$ and $\eta \geq 0$ are continuously differentiable functions of the temperature. Similarly, the heat flux obeys \emph{Fourier's law},
\begin{equation} \label{w9}
	\vc{q}(\vt, \Grad \vt)= - \kappa (\vt) \Grad \vt,
\end{equation}	
where the heat conductivity coefficient $\kappa > 0$ is a continuously differentiable function of the temperature.

As pointed out in the introduction, the boundary conditions play a crucial role in the 
long time behaviour of the system. We suppose that the fluid occupies a bounded domain $\Omega \subset R^3$ with a smooth boundary, 
\begin{align} 
\partial \Omega = \Gamma^\vu_D \cup \Gamma^\vu_N = \Gamma^\vt_D \cup \Gamma^\vt_N = 
\Gamma^\vB_D \cup \Gamma^\vB_N, \br \Gamma^\vu_D, \Gamma^\vu_N, \Gamma^\vt_D, \Gamma^\vt_N, 
\Gamma^\vB_D , \Gamma^\vB_N \ \mbox{compact}, \br
\Gamma^\vu_D \cap \Gamma^\vu_N = \emptyset,\ \Gamma^\vt_D \cap \Gamma^\vt_N = \emptyset,\ 
\Gamma^\vB_D \cap \Gamma^\vB_N = \emptyset.
\label{p5}
	\end{align}
Accordingly, each $\Gamma^*_D$, $\Gamma^*_N$ is either empty or coincides with a finite union of connected components of $\partial \Omega$.
We impose the following boundary conditions: 

\begin{mdframed}[style=MyFrame]
	
	{\bf Boundary velocity:}
		\begin{align}  
		\vu|_{\Gamma^\vu_D} &= 0,
	\label{p6} \\ 
		\vu \cdot \vc{n}|_{\Gamma^\vu_N} &= 0,\ 
		[\mathbb{S}(\vt, \Grad \vu) \cdot \vc{n}] \times \vc{n} |_{\Gamma^\vu_N} = 0. 
				\label{p7}
		\end{align}
	
\noindent	
	{\bf Boundary temperature/heat flux:}
	\begin{align}
		\vt|_{\Gamma^\vt_D} &= \vtB, \label{p8} \\ 
		\Grad \vc{q}(\vt, \Grad \vt) \cdot \vc{n}|_{\Gamma^\vt_N} &= 0. 
		\label{p9}
		\end{align} 
	
\noindent{\bf Boundary magnetic field:} 
\begin{align} 
\vB \times \vc{n}|_{\Gamma^\vB_D} &= \vc{b}_\tau, \label{p10} \\
\vB \cdot \vc{n}|_{\Gamma^\vB_N} = b_\nu,\ 
[ (\vB \times \vu) + \zeta \Curl \vB ] \times \vc{n}|_{\Gamma^\vB_N} &= 0. 
 	\label{p11}
	\end{align}
	
	\end{mdframed}

Clearly, the boundary conditions prescribed on $\Gamma^{*}_D$ are of Dirichlet type, while those on 
$\Gamma^{*}_N$ of Neumann type. The fluxes on $\Gamma^{*}_N$ are set to be zero for the sake of simplicity.
More general fluxes are shortly discussed in Section \ref{GF}. 
In accordance with \eqref{p6}, \eqref{p7}, the boundary is impermeable, meaning the fluid velocity 
is always tangent to the boundary. In particular, the total mass 
\begin{equation} \label{p12}
	m_0 = \intO{ \vr(t, \cdot) } 
	\end{equation}  
is a constant of motion. Accordingly, there is no exchange of mass with the 
outer world and the system is therefore driven by the imposed boundary temperature and/or magnetic field.

\subsection{Levinson dissipativity}

The compressible MHD system admits a natural energy 
\[
E(\vr, \vt, \vu, \vB) = \underbrace{\frac{1}{2} \vr |\vu|^2}_{\mbox{kinetic energy}} + \underbrace{\vr e(\vr, \vt)}_{\mbox{internal energy}} + \underbrace{ \frac{1}{2} |\vB|^2 }_{\mbox{magnetic energy}}.
\]
Our goal is to show that the system is \mbox{dissipative} in the sense of Levinson in terms of the 
total energy 
\[
\mathcal{E} = \intO{ E(\vr, \vt, \vu, \vB) }.
\]
Specifically, there exists a universal constant $\mathcal{E}_\infty$ such that 
\begin{equation} \label{p13}
	\limsup_{\tau \to \infty} \intO{ E(\vr, \vt, \vu, \vB)(\tau, \cdot) } < \mathcal{E}_\infty
	\end{equation}
for \emph{any} solution $(\vr, \vt, \vu, \vB)$ of the compressible MHD system defined on $(T, \infty)$. We point out that $\mathcal{E}_\infty$ depends only on the ``data'' but it is the same for any global trajectory. In particular, it is independent of the initial energy of the system. 

The data are:

\begin{itemize}
	\item the total mass $m_0$ of the fluid, cf. \eqref{p12}; 
	\item the gravitational potential $G$; 
	\item the rotation vector $\bfomega$; 
	\item the boundary temperature $\vtB$; 
	\item the boundary tangential magnetic field $\vc{b}_\tau$, the boundary normal magnetic field 
	$b_\nu$.
		\end{itemize}
We shall use the symbol $\nb$ to denote the norm of the above data in suitable function spaces specified below.

Besides the total mass $m_0$, there might be other conserved quantities. As shown e.g. by Bauer, Pauly, and 
Schomburg \cite{BaPaSc}, see also Kozono and Yanagisawa \cite{KozYan}, any vector field $\vc{b}$ defined on $\Omega$ admits a decomposition 
\begin{equation} 
	\label{p14}
	\vc{b} = \Grad P + \vc{h} + \Curl \vc{A}, 
	\end{equation}
where 
\begin{equation} \label{p15}
\vc{h} \in \mathcal{H}(\Omega) = \left\{ \vc{h} \in L^2(\Omega; R^3) \ \Big|\ \Curl \vc{h} = \Div \vc{h} = 0, 
\vc{h} \times \vc{n}|_{\Gamma^\vB_D} = 0, \ \vc{h} \cdot \vc{n}|_{\Gamma^\vB_N} = 0    \right\}.
\end{equation}
It follows from the induction equation \eqref{p3} and our choice of the boundary conditions \eqref{p10}, \eqref{p11} that 
\[
\frac{\D }{\dt} \intO{ \vB \cdot \vc{h} } = 0 \ \mbox{for any}\ \vc{h} \in \mathcal{H}(\Omega).
\]
For the sake of simplicity, we shall assume 
\begin{equation} \label{p16}
\intO{ \vB   \cdot \vc{h} } = 0 \ \mbox{for all} \ \vc{h} \in \mathcal{H}(\Omega).	
	\end{equation}
The general case can be handled in a similar way provided the projection of $\vB$ on the space $\mathcal{H}(\Omega)$
is included in the ``data''. Note that the space $\mathcal{H}(\Omega)$ is always finite dimensional, 
see \cite[Section 5.1]{BaPaSc}. Moreover, it is easy to see that $\mathcal{H} = \{ 0 \}$ whenever $\Omega$ is simply connected and $\Gamma^\vB_D$ connected. If $\Gamma^\vB_N = \partial \Omega$ or $\Gamma^\vB_D = \partial \Omega$, 
the dimension of $\mathcal{H}(\Omega)$ coincides  with the first and second Betti number of $\Omega$, respectively, see Kozono and Yanagisawa \cite{KozYanI}.

In view of \eqref{p13}, the boundary conditions must allow the outflow of the thermal energy. 
We suppose
the pressure $p = p(\vr, \vt)$ and the internal energy $e=e(\vr, \vt)$ are interrelated through \emph{Gibbs' equation}
\begin{equation} \label{p17}
	\vt D s = De + p D \left( \frac{1}{\vr} \right),
\end{equation}
where $s = s(\vr, \vt)$ is the entropy. Consequently, the internal energy balance \eqref{p4} may be reformulated
in the form of \emph{entropy balance equation}
\begin{align}
	\partial_t (\vr s(\vr, \vt)) + \Div (\vr s(\vr, \vt) \vu) &+ \Div \left( \frac{\vc{q}(\vt, \Grad \vt)}{\vt} \right) \br &=
	\frac{1}{\vt} \left( \mathbb{S}(\vt, \Grad \vu) : \Grad \vu - \frac{\vc{q}(\vt, \Grad \vt) \cdot \Grad \vt}{\vt} + \zeta(\vt) |\Curl \vB |^2 \right), \label{p18}
\end{align}
see e.g. Weiss and Proctor \cite{Weiss}. The quantity on the right--hand side of \eqref{p18} represents the 
entropy production rate, and, in accordance with the Second law of thermodynamics, it is always non--negative. 
Consequently, all forms of energy are eventually transformed to heat that must be allowed to leave through $\partial \Omega$. Thus, necessarily,
\begin{equation} \label{p19}
\Gamma^\vt_D \ne \emptyset. 
\end{equation}

Next, to avoid development of rapidly rotating fluid in a rotationally symmetric domain, we impose 
\begin{equation} \label{p20}
	\Gamma^\vu_D \ne \emptyset.
	\end{equation}

Finally, we impose a technical condition to control the boundary magnetic oscillations. 
First, we introduce the class of stationary magnetic boundary data. 

\begin{mdframed}[style=MyFrame]
	
	\begin{Definition} \label{Dew1} {\bf [Stationary magnetic field] }
		
		We say that the boundary data $b_\nu$, $\vc{b}_\tau$ are \emph{stationary}, if there exists 
		a continuously differentiable vector field $\vc{B}_B$ such that 
		\[
		\Div \bB = 0,\ \Curl \bB = 0 \ \mbox{in}\ \Omega,\ \bB \cdot \vc{n}|_{\Gamma^\vB_N} = b_\nu,\ 
		\bB \times \vc{n}|_{\Gamma^\vB_D} = \vc{b}_\tau.
		\]
		
		\end{Definition}
	
	\end{mdframed}

Stationarity imposes certain restrictions to be satisfied by $\vc{b}_\tau$,  
\begin{equation} \label{p20a}
{\rm div}_\tau \vc{b}_\tau = 0 \ \mbox{on}\ \Gamma^\vB_D,\ 
\mbox{where}\ {\rm div}_\tau \ \mbox{denotes the tangential divergence,}
\end{equation}
see Auchmuty and Alexander \cite{AuAl}. To establish the existence of a bounded absorbing set, we need an extra hypothesis imposed on the boundary data if the 
magnetic boundary field \emph{is not} stationary, namely
\begin{equation} \label{p21}
\Gamma^\vB_D \subset \Gamma^\vu_D.
\end{equation}
In accordance with Kirchhoff's rule for currents, we have 
\[
{\rm div}_\tau \vc{b}_\tau + \Curl \vc{B} \cdot \vc{n} = 0 \ \mbox{on}\  \Gamma^\vB_D, 
\]
whenever 
\[
\Div \vc{B}  = 0 \ \mbox{in}\ \Omega,\ \vc{B} \times \vc{n}|_{\Gamma^\vB_D} = \vc{b}_\tau, 
\]
see Alexander and Auchmuty \cite{AuAl}.
Thus prescribing a non--stationary vector field $\vc{b}_\tau$ yields an inhomogeneous boundary condition satisfied by 
the normal component of the electric current $\vc{J}$ proportional to $\Curl \vc{B}$, cf. Section \ref{DN}.

\subsection{Absence of static solutions - an example from astrophysics}

Although all global in time solutions eventually enter a bounded absorbing ball, it is worth noting that the system in question may not admit a static stationary solutions, meaning
\[
\vu = 0, \ \vr = \vr(x), \ \vt = \vt(x), \ \vB = \vB(x).
\]
Indeed consider a simple model of a solar convective zone discussed by Tao et al. \cite{Taoetal}. The domain 
boundary is determined by two concentric balls, 
\begin{equation} \label{pp1}
	\Omega  = \left\{ x \in R^3 \ \Big|\ 0 < r_1 < |x| < r_2 \right\}, 
	\end{equation}
with the gravitational potential $G = \Ov{g} \frac{1}{|x|}$. The boundary temperature distribution
is determined by two positive constants $\Theta_{\rm int}$, $\Theta_{\rm ext}$,  
\begin{equation} \label{pp2}
	\vtB = \Theta_{\rm int} \ \mbox{if}\ |x| = r_1,\ \vtB = \Theta_{\rm ext} \ \mbox{if}\ |x| = r_2,\ 
 	0 < \Theta_{\rm ext} < \Theta_{\rm int}. 
	\end{equation}
Finally, the boundary magnetic field is oriented in the normal direction, 
\begin{equation} \label{pp3}
	\Gamma^\vB_N = \emptyset,\ \vc{b}_\tau = 0.
	\end{equation}
The whole system rotates with the rotation axis $\bfomega \ne 0$.

Suppose the system \eqref{p1}--\eqref{p4} endowed with the above boundary conditions admits a static solution, 
meaning
\begin{align}
\Grad p(\vr, \vt) &= \vr \Grad M + \Curl \vB \times \vB, \br
\Curl (\zeta (\vt) \Curl \vB ) &= 0,\ \Div \vB = 0, \br
\Div \vc{q} (\vt, \Grad \vt) &= \zeta(\vt) |\Curl \vB |^2.
	\label{stat}
	\end{align}
The first observation is 
\[
\Curl \vB = 0, 
\]
which follows easily via multiplying the induction equation on $\vc{B}$ and integrating by parts. Consequently, 
the system reduces to 
\begin{align}
\Grad p(\vr, \vt) = \vr \Grad M,\ \Div \vc{q}(\vt, \Grad \vt) = 0.	
	\end{align} 
Applying $\Curl$ to the first equation, we get 
\begin{equation} \label{stat1}
	\Grad \vr \times \Grad M = 0.
	\end{equation}
Similarly, writing 
\[
\Grad p(\vr, \vt) = \partial_\vr p(\vr, \vt) \Grad \vr + \partial_\vt p(\vr, \vt) \Grad \vt = \vr \Grad M
\]
we deduce from \eqref{stat1} $\Grad \vr \times \Grad \vt = 0$ anticipating the condition 
\begin{equation} \label{stat2}
\frac{\partial p(\vr, \vt)}{\partial \vt} \ne 0.
\end{equation}
We conclude 
\begin{equation} \label{stat3}
	\Grad M \times \Grad \vt = 0 .
	\end{equation}

Since $\vc{q}$ is given by Fourier's law $\vc{q} = - \kappa (\vt) \Grad \vt$, 
the unique solution $\vt$ of $\Div \vc{q}(\vr, \Grad \vt)=0$ with the boundary conditions \eqref{pp2} 
admits a non--zero gradient $\Grad \vt$ parallel to 
the gravitational force
\[
\Grad G \times \Grad \vt = 0.
\]
Consequently, condition \eqref{stat3} implies 
\[
\Grad \vt \times \Grad |\bfomega \times x|^2 = 0, 
\]
which is impossible as soon as $\bfomega \ne 0$.

\subsection{Asymptotic compactness}

The existence of global--in--time \emph{classical} solutions for the compressible MHD system in the 
far from equilibrium regime is not known. Here, we adopt the concept of \emph{weak solution} developed in 
\cite{FeGwKwSG}. Unfortunately, the weak solutions may not be uniquely determined by the initial/boundary data. 
Adapting the approach of Sell \cite{SEL}, see also M\' alek and Ne\v cas \cite{MANE}, we consider solutions 
of the compressible MHD system in the space of \emph{trajectories} with a natural group operation of 
forward time shift:
\[
(\vr, \vt, \vu, \vB) (t, \cdot) \mapsto (\vr, \vt, \vu, \vB)(t + \tau),\ \tau \geq 0.
\]

Anticipating the existence of a bounded absorbing set, we consider a sequence of 
global in time solutions $(\vrn, \vtn, \vun, \vB_n)$ along with a sequence of time shifts 
$\tau_n \to \infty$. We say the system is \emph{asymptotically compact} if 
\[
(\vrn, \vtn, \vun, \vB_n)(\cdot + \tau_n) \to 
(\vr, \vt, \vu, \vB) \ \mbox{locally on compact time intervals}\ (-L, L),
\]
where $(\vr, \vu, \vt, \vB)$ is a solution of the same problem with suitable data defined for $t \in (-\infty, \infty)$. Here ``suitable data'' means limits of the corresponding time shifts of the original data. In particular, 
the problem is the same if the data are independent of time. 

Solutions defined for all $t \in (- \infty, \infty)$ are called \emph{entire solutions}. The set of all bounded 
entire solutions,  
\begin{align}
\mathcal{A} = \Big\{ (\vr, \vt, \vu, \vB) \ \Big| & 
(\vr, \vt, \vu, \vB) - \mbox{solution of the compressible (MHD) system for}\ t \in (-\infty, \infty) \br
& \intO{ E(\vr, \vt, \vu, \vB ) } < \mathcal{E}_\infty \Big\}
\nonumber
\end{align}
is shift invariant and captures the essential features of the long time behaviour of the system.

\subsection{Organization of the paper}

The results of the present paper can be seen as a continuation of an ongoing research programme \cite{FanFeiHof},
\cite{FeiPr},  \cite{FeiSwGw} 
extending the classical theory of the dynamics of incompressible viscous fluid flows (see e.g. Constantin et al. 
\cite{CF1}, \cite{CFT}, Foias et al. \cite{FMRT} ) to the compressible case. To the best of our knowledge, the present paper is the first result addressing the problem of long--time behaviour of the compressible MHD system
far from equilibrium. 
There is a large number of literature concerning the incompressible case, see e.g. Duvaut and Lions \cite{DuvLion}, 
Eden and Libin \cite{EdeLib}, Sermange and Temam \cite{SerTem}, Jiang and Jiang \cite{JiangJiang} and the references therein. 

The concept of weak solution and other preliminaries are introduced in Section \ref{w}. The bulk of the paper 
consists of three parts. In Section \ref{AS}, we show the existence of a bounded absorbing set. Section 
\ref{AC} contains the proof of asymptotic compactness. Applications and possible extensions are discussed in Section \ref{AP}.

\section{Main hypothesis, weak solutions}
\label{w}

Before introducing the concept of weak solution, let us state the main hypotheses concerning the structural properties
of the constitutive relations. 

\subsection{Equation of state}

The hypotheses imposed on the form of the equations of state are 
the same as in \cite[Chapter 4]{FeiNovOpen}. They are based on the Second law of thermodynamics enforced 
through Gibbs' relation \eqref{p17} and the \emph{hypothesis of thermodynamics stability} 
\begin{equation} \label{w4}
	\frac{ \partial p (\vr, \vt) }{\partial \vr} > 0,\
	\frac{ \partial e (\vr, \vt) }{\partial \vt} > 0.
\end{equation}
We suppose the pressure obeys the \emph{thermal equation of state} 
\begin{equation} \label{eosT}
	p(\vr, \vt)  = p_M(\vr, \vt) + p_R(\vt),\ p_R(\vt) = \frac{a}{3} \vt^4,\ a > 0,
\end{equation}
where $p_R$ is the radiation pressure. Augmenting the pressure by a radiation component is not 
only technically convenient but also relevant to problems in astrophysics, cf.
Battaner \cite{BATT}. The internal energy satisfies the \emph{caloric equation of state}
\begin{equation} \label{eosC}
	e(\vr, \vt) = e_M (\vr, \vt) + e_R (\vr ,\vt),\ e_R (\vr ,\vt) = \frac{a}{\vr} \vt^4. 
\end{equation}	 
The gas pressure components $p_M$ and $e_M$ satisfy the relation characteristic for monoatomic gases:
\begin{equation} \label{eosM}
	p_M (\vr, \vt) = \frac{2}{3} \vr e_M(\vr, \vt). 
\end{equation} 

Now, it follows from Gibbs' relation \eqref{p17} applied to $p_M$, $e_M$ that they must take a general form
\begin{align}
	p_M(\vr,\vt) = \vt^{\frac{5}{2}} P \left( \frac{\vr}{\vt^{\frac{3}{2}}  } \right),\ e_M(\vr,\vt) =  \frac{3}{2} \frac{\vt^{\frac{5}{2}} }{\vr} P \left( \frac{\vr}{\vt^{\frac{3}{2}}  } \right)
	\label{w1}	
\end{align}
for some the function $P \in C^1[0,\infty)$. In addition, the hypothesis of thermodynamics stability \eqref{w4}
applied to $p_M$, $e_M$ yields
\begin{equation} \label{w2}
	P(0) = 0,\ P'(Z) > 0,\ \frac{ \frac{5}{3} P(Z) - P'(Z) Z }{Z} > 0 \ \mbox{for all}\ Z \geq 0.
\end{equation} 	
In accordance with \eqref{w1}, the entropy takes the form
\begin{equation} \label{w5}
	s(\vr, \vt) = s_M(\vr, \vt) + s_R (\vr, \vt),\ s_M(\vr, \vt) = \mathcal{S} \left( \frac{\vr}{\vt^{\frac{3}{2}} } \right),\ s_R(\vr, \vt) = \frac{4a}{3} \frac{\vt^3}{\vr},
\end{equation}
where
\begin{equation} \label{w6}
	\mathcal{S}'(Z) = -\frac{3}{2} \frac{ \frac{5}{3} P(Z) - P'(Z) Z }{Z^2}.
\end{equation}

Finally, we impose two technical but physically grounded hypotheses
in the degenerate area \[
\frac{\vr}{\vt^{\frac{3}{2}}} = Z >> 1.\] 
First, it follows from 
\eqref{w2} that the function $Z \mapsto P(Z)/ Z^{\frac{5}{3}}$ is decreasing, and we suppose
\begin{equation} \label{w3}
	\lim_{Z \to \infty} \frac{ P(Z) }{Z^{\frac{5}{3}}} = p_\infty > 0.
\end{equation}
Second, by the same token, the function $Z \mapsto \mathcal{S}(Z)$ decreasing, and we suppose 
\begin{equation} \label{w7}
	\lim_{Z \to \infty} \mathcal{S}(Z) = 0.	
\end{equation}	
Hypothesis \eqref{w3} reflects the effect of the electron pressure in the degenerate area, while 
\eqref{w7} is nothing other than the Third law of thermodynamics.

It follows from \eqref{w1}--\eqref{w7} that
\begin{align}\label{ww}
	p(\vr, \vt) &\approx \vr e(\vr, \vt), \br
	\vr^{\frac{5}{3}} + \vt^4 &\aleq p(\vr, \vt) \aleq  	\vr^{\frac{5}{3}} + \vt^4 + 1, \br
	0 \leq \vr s(\vr, \vt) &\aleq \vt^3 + \vr \left( 1 + [\log \vr]^+ + [\log \vt]^+ \right).
\end{align}
Here and hereafter, the symbol $a \aleq b$ means there is a positive constant $c$ such that $a \leq cb$, 
the symbol $a \approx b$ is used to denote $a \aleq b$ and $b \aleq a$.
The reader may consult \cite[Chapter 4]{FeiNovOpen} for the physical background of the above hypotheses as well 
as the proof of \eqref{ww}.

\subsection{Diffusion and the transport coefficients}

The transport coefficients appearing in \eqref{w8}, \eqref{w9} satisfy
\begin{align}
	0 < \underline{\mu} \left(1 + \vt \right) &\leq \mu(\vt) \leq \Ov{\mu} \left( 1 + \vt \right),\
	|\mu'(\vt)| \leq c \ \mbox{for all}\ \vt \geq 0, \br
	0 &\leq  \eta(\vt) \leq \Ov{\eta} \left( 1 + \vt \right),
	\label{w10}
\end{align}
and
\begin{equation} \label{w11}
	0 < \underline{\kappa} \left(1 + \vt^\beta \right) \leq  \kappa(\vt) \leq \Ov{\kappa} \left( 1 + \vt^\beta \right) \ \mbox{for some}\  \beta > 6.
\end{equation}

Finally, we suppose the coefficient of magnetic diffusion $\zeta = \zeta(\vt)$ is a continuously differentiable function of the temperature,
\begin{equation} \label{w12}
	0 < \underline{\zeta}(1 + \vt) \leq \zeta (\vt) \leq \Ov{\zeta}(1 + \vt),\ |\zeta' (\vt) | \leq c
	\ \mbox{for all}\ \vt \geq 0.	
\end{equation}

\subsection{Boundary data}

It is convenient to assume the boundary data are given as restrictions of functions defined on $\Omega$ and for 
any $t \in R$. Specifically, 
\[
\vtB = \tvt|_{\Gamma^\vt_D},\ 
\vc{b}_\tau = \bB \times \vc{n}|_{\Gamma^\vB_D},\ 
b_\nu = \bB \cdot \vc{n} |_{\Gamma^\vB_N}
\]
for suitable extensions $\tvt$, $\bB$.
Accordingly, certain regularity of the boundary data is necessary. 

We suppose the boundary temperature $\vtB = \vtB(t,x)$ belongs to the class
\begin{align}
	\vtB &\in BC(R; C^2(\Gamma^\vt_D)), 
	\ \partial_t \vtB  \in BC (R; C^1(\Gamma^\vt_D)), \br
	\inf_{R \times \Gamma^\vt_D} \vtB &> 0.
\label{w13}	
\end{align}

Similarly, we suppose
\begin{align} 
	\vc{b}_\tau \in BC(R; C^2(\Gamma^\vB_D; R^3)), \ \partial_t \vc{b}_\tau \in BC(R; C^1(\Gamma^\vB_D; R^3)) \label{w14} \\ 
	b_\nu \in BC(R; C^2(\Gamma^\vB_N)), \ \partial_t b_\nu \in BC(R; C^1(\Gamma^\vB_N)).
\label{w15}	
	\end{align}
Moreover, we impose the necessary compatibility conditions
\begin{equation} \label{w15a}
\vc{b}_\tau \cdot \vc{n}|_{\Gamma^\vB_D} = 0,\ 
\mbox{and}\ \int_{\partial \Omega} b_\nu \ \D \sigma = 0 \ \mbox{if}\ \Gamma^{\vB}_D = \emptyset.
\end{equation}

Finally, we introduce the space 
\begin{equation} \label{w16}
H_{0, \sigma} = \left\{ \vc{b} \in L^2(\Omega; R^3) \ \Big|\ \Curl \vc{b} \in L^2(\Omega; R^3),\ 
\Div \vc{b} = 0, \vc{b} \times \vc{n}|_{\Gamma^\vB_D} = 0,\ \vc{b} \cdot \vc{n}|_{\Gamma^\vB_N} = 0    \right\}.
\end{equation}
The space $H_{0,\sigma}$ is a Hilbert space with the norm $\| \vc{b} \|^2_{H_0} = \| \Curl \vc{b} \|^2_{L^2(\Omega; R^3)} + \| \vc{b} \|^2_{L^2(\Omega; R^3)}$. The space $\mathcal{H}$ introduced in \eqref{p15} is 
a closed subspace of $H_{0, \sigma}$, and the following version of Poincar\' e inequality 
\begin{equation} \label{w17}
	\| \vc{b} \|_{W^{1,2}(\Omega; R^3)} \aleq \| \Curl \vc{b} \|_{L^2(\Omega; R^3)} 
	\ \mbox{holds for all}\ \vc{b}\in \mathcal{H}^\perp \cap H_{0, \sigma},
	\end{equation}
see Csat\' o, Kneuss and Rajendran \cite[Theorem 2.1]{CsKnRa}.

The specific extensions of the boundary data will be constructed in Section \ref{EE}.

\subsection{Weak solutions}

As we are interested in the long-time behaviour when the system ``forgets'' its initial state, the choice 
of initial data plays no role in the analysis. Accordingly, it is convenient to introduce the concept 
of global in time weak solutions defined for $t \in (T, \infty)$.

\begin{mdframed}[style=MyFrame]
	
	\begin{Definition} \label{Dw1} {\bf (Global in time weak solutions)}

A quantity $(\vr, \vt, \vu, \vB)$ is termed \emph{weak solution} of the compressible MHD system
\eqref{p1}--\eqref{p4}, with the boundary conditions \eqref{p6} -- \eqref{p11} in the 
time--space cylinder $(T, \infty) \times \Omega$
if the following holds:
\begin{itemize}
	\item{\bf Equation of continuity.}
	$\vr \in L^\infty_{\rm loc}(T, \infty; L^{\frac{5}{3}} (\Omega))$, $\vr \geq 0$, and the integral
	identity
\begin{equation} \label{w18}
	\int_T^\infty \intO{ \Big( \vr \partial_t \varphi + \vr \vu \cdot \Grad \varphi \Big) } \ dt = 0
\end{equation}
holds for any $\varphi \in C^1_c((T, \infty) \times \Ov{\Omega})$. 	
In addition, the renormalized version of \eqref{w18}
\begin{equation} \label{w19}
	\int_T^\infty \intO{ \left( b(\vr) \partial_t \varphi + b(\vr) \vu \cdot \Grad \varphi +
		\Big( b(\vr) - b'(\vr) \vr \Big) \Div \vu \varphi \right) } \dt = 0
\end{equation}
holds for any $\varphi \in C^1_c((T, \infty) \times \Ov{\Omega})$ and any $b \in C^1(R)$, $b' \in C_c(R)$.

\item{\bf Momentum equation.}

$\vr \vu \in L^\infty_{\rm loc}(T, \infty ; L^{\frac{5}{4}}(\Omega; R^3))$,
$\vu \in L^2_{\rm loc}(T, \infty; W^{1,2}(\Omega; R^3))$, $\vu|_{\Gamma^\vu_D} = 0$,
$\vu \cdot \vc{n}|_{\Gamma^\vu_N} = 0$,
and the integral identity
\begin{align}
	\int_T^\infty &\intO{ \Big( \vr \vu \cdot \partial_t \bfphi + \vr \vu \otimes \vu : \Grad \bfphi + 
		(\vr \vu \times \bfomega) \cdot \bfphi +
		p(\vr, \vt) \Div \bfphi \Big) } \br &= \int_0^\tau \intO{ \mathbb{S}(\vt, \Grad \vu) : \Grad \bfphi } \dt -
	\int_T^\infty \intO{ \left( \vB \otimes \vB - \frac{1}{2} |\vB |^2 \mathbb{I} \right) : \Grad \bfphi } \dt \br &-
	\int_T^\infty \intO{ \vr \Grad M \cdot \bfphi } \dt  	
	\label{w20}
\end{align}
for any $\bfphi \in C^1_c((T, \infty) \times \Ov{\Omega}; R^3)$,
$\bfphi|_{\Gamma^\vu_D} = 0$, $\bfphi \cdot \vc{n}|_{\Gamma^\vu_N} = 0$.

\item {\bf Induction equation.}
$\vB \in L^\infty_{\rm loc}((T, \infty); L^2(\Omega; R^3))$,
\begin{equation} \label{w21}
	\Div \vc{B}(\tau, \cdot) = 0 \ \mbox{for any}\ \tau \in (T, \infty),
	\end{equation} 
\begin{equation} \label{w22}
	(\vB - \bB) \in L^2_{\rm loc}(T, \infty; H_{0, \sigma}(\Omega; R^3)).
	\end{equation}
The integral identity
\begin{align}
	\int_T^\infty \intO{ \Big( \vB \cdot \partial_t \bfphi - (\vB \times \vu) \cdot \Curl \bfphi -
		\zeta (\vt) \Curl \ \vB \cdot \Curl \ \bfphi \Big) } \dt  = 0
	\label{w23}
\end{align}	
holds for any $\bfphi \in C^1_c((T, \infty) \times \Ov{\Omega}; R^3)$,
\begin{equation} \label{w24}
\bfphi \times \vc{n}|_{\Gamma^\vB_D} = 0,\ \bfphi \cdot \vc{n}|_{\Gamma^\vB_N} = 0.
\end{equation}

\item {\bf Entropy inequality.}
$\vt \in L^\infty_{\rm loc}(T, \infty; L^4(\Omega)) \cap L^2_{\rm loc}(T, \infty; 
W^{1,2}(\Omega))$, $\vt > 0$ a.a. in $(T, \infty) \times \Omega$,
$\log (\vt) \in L^2_{\rm loc}(T, \infty; W^{1,2}(\Omega))$, 
\[
\vt|_{\Gamma^\vt_D} = \vtB.
\]
The integral inequality
\begin{align}
	\int_T^\infty &\intO{ \left( \vr s (\vr, \vt) \partial_t \varphi + \vr s (\vr, \vt) \vu \cdot \Grad \varphi + \frac{\vc{q} (\vt, \Grad \vt) }{\vt} \cdot
		\Grad \varphi \right) } \dt \br & \leq  - \int_T^\infty \intO{ \frac{\varphi}{\vt}
		\left( \mathbb{S}(\vt, \Grad \vu) : \Grad \vu - \frac{\vc{q} (\vt, \Grad \vt) \cdot \Grad \vt}{\vt} + \zeta (\vt) |\Curl \vB|^2 \right) }
	\dt	
	\label{w25}	
\end{align}
holds for any $\varphi \in C^1_c((T, \infty) \times \Ov{\Omega})$, $\varphi \geq 0$, $\varphi|_{\Gamma^\vt_D} = 0$.

\item {\bf Ballistic energy inequality.}
The inequality
\begin{align}
	&\int_T^\infty \partial_t \psi	\intO{ \left( \frac{1}{2} \vr |\vu|^2 + \vr e(\vr, \vt) + \frac{1}{2} |\vB |^2 - \tvt \vr s(\vr, \vt) - \bB \cdot \vB\right) } \dt \br
	\quad&
	- \int_T^\infty \psi \intO{ 	\frac{\tvt}{\vt} \left( \mathbb{S}(\vt, \Grad \vu) : \Grad \vu - \frac{\vc{q}(\vt, \Grad \vt) \cdot \Grad \vt}{\vt} + \zeta (\vt) | \Curl \vB |^2 \right) } \dt \br
	&\quad \geq  \int_T^\infty \psi \intO{\left( \vr s (\vr, \vt) \partial_t \tvt + \vr s (\vr, \vt) \vu \cdot \Grad \tvt + \frac{\vc{q}(\vt, \Grad \vt)}{ \vt} \cdot \Grad \tvt                    \right)      } \dt\br
	&\quad + \int_T^\infty \psi \intO{  \Big( \vB \cdot \partial_t \bB - (\vB \times \vu) \cdot \Curl \bB -
		\zeta (\vt) \Curl \ \vB \cdot \Curl \bB \Big)    } \dt \br 	
	&\quad -  \int_T^\infty \psi \intO{ \vr \Grad M \cdot \vu } \dt 
	\label{w26}
\end{align}
holds for any $\psi \in C^1_c(T, \infty)$, $\psi \geq 0$, and any
$\tvt \in C^1_c((T, \infty) \times \Ov{\Omega})$, $\tvt > 0$, $\tvt|_{\Gamma^\vt_D} = \vtB$.

	\end{itemize}		
		
		\end{Definition}

	\end{mdframed}

\begin{Remark} \label{Rw1}
	
	Note carefully that the ballistic energy inequality \eqref{w26} remains valid if we replace
	$\bB$ by any other extension $\tvB$ such that
	\[
	\Div \tvB = 0,\ \tvB \times \vc{n}|_{\Gamma^\vB_D} =
	\bB \times \vc{n}|_{\Gamma^\vB_D},\ \tvB \cdot \vc{n}|_{\Gamma^\vB_N} =
	\bB \cdot \vc{n}|_{\Gamma^\vB_N},
	\]
	respectively. Indeed the difference $\bB - \tvB$ becomes an eligible test function
	for the weak formulation of the induction equation \eqref{w23}, \eqref{w24}.  
	
	\end{Remark}

The existence of global--in--time weak solutions as well as the weak--strong uniqueness property for any finite energy initial data was shown in \cite{FeGwKwSG} under more restrictive hypotheses on the boundary data.

\section{Bounded absorbing set}
\label{AS}

We are ready to state the first main result of the present paper concerning the existence of a bounded absorbing set. It is convenient to suppose that both the potential $G$ as well as the 
boundary data $\vtB$, $\vc{b}_\tau$, and
$b_\nu$ are defined for all $t \in R$. Accordingly, we introduce 
\begin{align} 
\| (\mbox{data}) \| &= m_0 + m_0^{-1} + \|G \|_{W^{1,\infty}(R \times \Omega)} + |\bfomega| + 
\| \vtB^{-1} \|_{L^\infty(R \times \partial \Gamma^\vt_D)} \br
&+ \sup_{t \in R} \| \vtB (t, \cdot) \|_{W^{2, \infty}(\Gamma^\vt_D)} + 
\sup_{t \in R} \| \partial_t \vtB \|_{W^{1,\infty}(\Gamma^\vt_D )}
 \br
&+ \sup_{t \in R} \| \vc{b}_\tau (t, \cdot) \|_{W^{2, \infty}(\Gamma^\vB_D; R^3)} + 
\sup_{t \in R} \| b_\nu \|_{W^{2,\infty}(\Gamma^\vB_N )} \br &+
\sup_{t \in R} \| \partial_t \vc{b}_\tau (t, \cdot) \|_{W^{1, \infty}(\Gamma^\vB_D; R^3)} + 
\sup_{t \in R} \| \partial_t b_\nu \|_{W^{1,\infty}(\Gamma^\vB_N )}.
\label{AS1}
\end{align}
In what follows, we shall use the symbol $c(\nb)$ to denote a generic positive function of $\nb$ bounded for bounded 
arguments.

\begin{mdframed}[style=MyFrame]
	
	\begin{Theorem} \label{TAS1} {\bf [Bounded absorbing set] }
		
		Let $\Omega \subset R^3$ be a bounded domain of class $C^{2 + \nu}$, the boundary of which admits the decomposition \eqref{p5}. Let the pressure $p$ be related to the internal energy $e$ 
		through the equations of state \eqref{eosT}, \eqref{eosC}, \eqref{eosM}, where 
		the functions $P$ and $\mathcal{S}$ satisfy \eqref{w3}, \eqref{w7}.
		Let the transport coefficients $\mu$, $\eta$, $\kappa$, and $\zeta$
		be continuously differentiable functions of the temperature satisfying \eqref{w10}--\eqref{w12}. Finally, let $G \in BC^1(R \times \Ov{\Omega})$ and let the boundary data belong to the class \eqref{w13}--\eqref{w15a}, where  
			\begin{equation} \label{AS3}
			\Gamma^\vu_D \ne \emptyset,\ \Gamma^\vt_D \ne \emptyset.  
			\end{equation}
		If, in addition, the boundary magnetic field $b_\nu(\tau_n, \cdot)$, $\vc{b}_\tau(\tau_n, \cdot)$ is not stationary for some sequence of times $\tau_n \to \infty$, we assume 
		\begin{equation} \label{AS3a}
			\Gamma^\vB_D \subset \Gamma^\vu_D.
			\end{equation}
		
		Then there exists a positive constant $\mathcal{E}_\infty (\nb)$ depending only on the amplitude 
		of the data specified in \eqref{AS1} such that following holds. For any weak solution $(\vr, \vt, \vu, \vB)$ of 
		the compressible MHD system in $(T, \infty) \times \Omega$ satisfying 
		\[
		\intO{\vr } = m_0,\ \intO{ \vB \cdot \vc{h} } = 0 \ \mbox{for all}\ \vc{h} \in \mathcal{H}(\Omega), 
		\]
		there exists a time $\tau > 0$ such that 
		\[
		\intO{ E(\vr, \vt, \vu, \vB) (t, \cdot) } < \mathcal{E}_\infty 
		\ \mbox{for all}\ t > T + \tau.
		\]
		Moreover, the length of the time $\tau$ depends only on $\nb$ and on the ``initial energy''
		\[
		\mathcal{E}_T \equiv \limsup_{t \to T-} \intO{ E(\vr, \vt, \vu, \vB) (t, \cdot) }.
		\] 
		\end{Theorem}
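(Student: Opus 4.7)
The plan is to extract absorption from the ballistic energy inequality \eqref{w26}, since \eqref{p1} already provides mass conservation for free. The central observation is that, for any smooth positive extension $\tvt$ of $\vtB$ and any divergence-free extension $\tvB$ of $(\vc{b}_\tau, b_\nu)$, the quantity appearing on the left of \eqref{w26} differs from the Helmholtz-type functional
$$H_{\tvt, \tvB}(\vr,\vt,\vu,\vB) = \tfrac{1}{2}\vr|\vu|^2 + \vr e(\vr,\vt) - \tvt \vr s(\vr,\vt) + \tfrac{1}{2}|\vB - \tvB|^2$$
only by terms linear in $\vB$ and bounded in terms of $\nb$, and (using \eqref{ww} together with $\inf \vtB > 0$) $H_{\tvt,\tvB}$ controls the total energy $\mathcal{E}$ from below up to an $\nb$-dependent constant. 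So Theorem \ref{TAS1} reduces to deriving a Gr\"onwall-type inequality $\frac{d}{dt} H + \lambda H \leq c(\nb)$ in the sense of distributions.

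First I would construct the extensions. For $\tvt$, I would pick $\tvt \in BC^1(R \times \Ov\Omega)$ equal to $\vtB$ on $\Gamma^\vt_D$, strictly positive, and equal to a sufficiently large constant $\Theta_\infty$ outside a thin collar of $\Gamma^\vt_D$; this allows the entropy-production term to dominate convective contributions. For $\tvB$, I would take in the stationary case the harmonic extension from Definition \ref{Dew1}, so $\Curl \tvB = 0$, while in the non-stationary case a $C^1$ extension respecting $\Div \tvB = 0$, using \eqref{AS3a}. Then $(\vB - \tvB)$ is an admissible test function in the weak formulation \eqref{w23} thanks to Remark \ref{Rw1} and \eqref{w22}.

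Second, I would test \eqref{w26} with $\psi$ a mollified indicator of $(T, t)$ and estimate the right-hand side. The dissipation on the left controls (a) $\|\vu\|_{W^{1,2}}$ via Korn's inequality, using $\Gamma^\vu_D \neq \emptyset$; (b) $\|\vt^{\beta/2} - \overline{c}\|_{W^{1,2}}$ and $\|\log\vt\|_{W^{1,2}}$ via Poincar\'e and the trace $\vt|_{\Gamma^\vt_D} = \vtB$; and (c) $\|\vB - \tvB\|_{W^{1,2}}$ via the Poincar\'e-type inequality \eqref{w17} for $\mathcal{H}^\perp$ (here we use the hypothesis $\int_\Omega \vB \cdot \vc{h} = 0$). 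The forcing terms involving $\partial_t \tvt$, $\Grad \tvt$, $\partial_t \tvB$, $\vr \Grad M \cdot \vu$, and $(\vB \times \vu)\cdot \Curl \tvB$ are then absorbed using Cauchy--Schwarz, Sobolev embedding $W^{1,2} \hookrightarrow L^6$, the bounds \eqref{ww} on $\vr s$, and the fact that $\tvt \geq \inf \vtB > 0$. The outcome is a differential inequality of the form
$$\tfrac{d}{dt} \intO{H_{\tvt,\tvB}} + \lambda \intO{H_{\tvt,\tvB}} \leq c(\nb),$$
from which $\limsup_{t\to\infty} \intO{H} \leq c(\nb)/\lambda$ follows, and the length of $\tau$ in the statement is read off as roughly $\lambda^{-1} \log(1 + \mathcal{E}_T / \mathcal{E}_\infty)$.

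The main obstacle is the non-stationary magnetic case, where $\Curl \tvB \not\equiv 0$ and the ballistic inequality \eqref{w26} produces the cross term $\int \zeta(\vt)\, \Curl \vB \cdot \Curl \tvB$ together with $\int (\vB \times \vu) \cdot \Curl \tvB$, neither of which is automatically small or sign-definite. The assumption \eqref{AS3a} is designed exactly for this: since $\vu|_{\Gamma^\vB_D} = 0$, one can integrate by parts the $(\vB \times \vu)\cdot \Curl \tvB$ term without boundary contributions on $\Gamma^\vB_D$, rewriting it as a bulk term controllable by $\|\vu\|_{L^6}\|\vB\|_{L^3}\|\Curl \tvB\|_{L^2}$ plus a Neumann boundary piece on $\Gamma^\vB_N$ which is absorbed through $\zeta(\vt)\Curl \vB$ using the second relation in \eqref{p11}. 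A careful bookkeeping of these terms -- combined with an interpolation $\|\vB\|_{L^3} \leq \|\vB - \tvB\|_{W^{1,2}}^{1/2}\|\vB\|_{L^2}^{1/2} + c(\nb)$ -- should put everything under the dissipation and yield the required coercive inequality.
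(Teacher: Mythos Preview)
Your plan has a genuine gap: the Gr\"onwall-type inequality $\tfrac{d}{dt}\intO{H} + \lambda \intO{H} \leq c(\nb)$ simply cannot be obtained from the ballistic energy inequality alone. The dissipation on the left of \eqref{w26} controls $\|\vu\|_{W^{1,2}}^2$, $\|\vt^{\beta/2}\|_{W^{1,2}}^2$, $\|\log\vt\|_{W^{1,2}}^2$, and $\|\Curl\vB\|_{L^2}^2$, but it carries \emph{no} information about the density beyond the mass constraint $\intO{\vr}=m_0$. Since $H$ contains the internal energy $\vr e(\vr,\vt)\approx \vr^{5/3}+\vt^4$, there is no way to produce the term $\lambda\intO{\vr^{5/3}}$ on the left. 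This is not a bookkeeping issue; it is the central difficulty of the compressible theory and the reason the paper's argument looks nothing like a linear Gr\"onwall.

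The paper instead proceeds in two stages that your proposal does not mention. First, it derives \emph{pressure estimates} by testing the momentum equation \eqref{w20} with $\mathcal{B}[\vr^\alpha - \langle\vr^\alpha\rangle]$ (the Bogovskii operator), which yields a bound on $\int_\tau^{\tau+1}\intO{\vr^{5/3+\alpha}}$ in terms of the dissipation and $\sup_t\intO{|F|}$ over a unit time window. Second, it proves a dichotomy lemma (Lemma~\ref{LAS1}): if the ballistic functional drops by at most $K$ over $[\tau,\tau+1]$, then it is bounded by $L(K,\nb)$ on that interval. The absorbing set then follows by iterating unit time steps. The closure in Lemma~\ref{LAS1} relies on interpolating $\vr^{5/3}$ between $L^1$ and $L^{5/3+\alpha}$ together with the Third law \eqref{w7}, which makes the entropy contribution $\mathcal{S}(r)$ as small as one wishes; this is what replaces the missing ``$\lambda H$''.

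Your handling of the non-stationary magnetic case is also incomplete. After integrating by parts, the cross term is bounded by $c(\nb)\|\vu\|_{W^{1,2}}\|\vB\|_{W^{1,2}}^{1/2}\|\vB\|_{L^2}^{1/2}$, and Young's inequality leaves a residual $C\|\vB\|_{L^2}^2$ on the right with a constant that is \emph{not} small. The paper avoids this by building a two-piece extension $\bB=\tvB_N+\tvB_{\delta,D}$ where $\tvB_N$ is irrotational and $\tvB_{\delta,D}$ is supported near $\Gamma^\vB_D$ with $\|\tvB_{\delta,D}\|_{L^3}\to 0$ as $\delta\to 0$ (via Bogovskii, \eqref{AS13}); then one integrates by parts the other way to get $\intO{\Curl(\vB\times\vu)\cdot\tvB_{\delta,D}}$, and the smallness of $\|\tvB_{\delta,D}\|_{L^3}$ lets the whole term be absorbed into the dissipation. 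Hypothesis \eqref{AS3a} is used precisely so that this integration by parts produces no boundary term.
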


	\end{mdframed}

\bigskip

The rest of this section is devoted to the proof of Theorem \ref{TAS1}. A similar result in the context of the 
Rayleigh--B\' enard convection problem was shown in \cite{FeiSwGw}. The influence of the magnetic field on the 
fluid motion, however, requires essential modifications of the proof presented in \cite{FeiSwGw}. In particular, 
we construct a two component extension of the boundary magnetic field, where the first component 
is solenoidal, irrotational and satisfies the Neumann boundary condition, while the second one is solenoidal and small in 
a suitable Lebesgue norm in $\Omega$ with bounded rotation, see Section \ref{E} below. 

As already pointed out, the condition $\Gamma^\vt_D \ne \emptyset$ is essential, and, in fact, necessary for the 
existence of a universal absorbing set. Indeed we show an example of a flow driven by an external magnetic field 
with thermally insulated boundary, where the total energy becomes unbounded for $t \to \infty$, see Section \ref{AP}.
 
\subsection{Extension of the boundary data}
\label{EE}

We start by specifying the extension of the boundary data for the temperature and the magnetic field. 

\subsubsection{Extension of the boundary temperature}

Given $\vtB$ we consider its harmonic extension $\tvt$, 
\begin{equation} \label{AS4}
\Del \tvt (\tau, \cdot) = 0,\ \tvt(\tau, \cdot)|_{\Gamma^\vt_D} = \vtB(\tau, \cdot),\ 
\Grad \tvt (\tau, \cdot) \cdot \vc{n}|_{\Gamma^\vt_N} = 0,\ \tau \in R.	
	\end{equation}
We shall use the same symbol $\vtB$ for the boundary data and their extension $\tvt$ hereafter.

\subsubsection{Extension of the boundary magnetic field}
\label{E}

If the boundary data $b_\nu$, $\vc{b}_\tau$ are stationary, we simply consider 
the solenoidal irrotational extension $\vc{B}_B$ claimed in Definition \ref{Dew1}.

Extension of the \emph{non--stationary} boundary magnetic field is more delicate. First, we find an extension $\tvB_N$ satisfying 
\begin{equation} \label{AS5}
\Div \tvB_N = 0,\ \Curl \tvB_N = 0,\ \tvB_N \cdot \vc{n}|_{\Gamma^\vB_N} = b_\nu,\ \tvB_N \times \vc{n}|_{\Gamma^\vB_D} = 0.	
	\end{equation}
Indeed we may consider $\tvB_N = \Grad \Phi_N$, where $\Phi_N$ is the unique solution of the mixed 
boundary value problem 
\[
\Del \Phi_N = 0,\ \Phi_N|_{\Gamma^\vB_D} = 0,\ \Grad \Phi_N \cdot \vc{n}|_{\Gamma^\vB_N} = b_\nu. 
\]

To find a suitable extension of the Dirichlet data $\vc{b}_\tau$, we introduce the so--called \emph{Bogovskii operator} $\mathcal{B}$ enjoying the following properties, see e.g. Galdi \cite{GALN}, Gei{\ss}ert,  Heck, and Hieber \cite{GEHEHI}: 
\begin{itemize}
\item 
$\mathcal{B}$ is a bounded linear operator mapping $L^q_0(Q)$ to $W^{1,q}_0(Q; R^3)$, where 
$Q$ is a bounded Lipschitz domain in $R^d$ and the symbol $L^q_0$ denotes the space of $L^q-$integrable 
functions with zero mean:	
\begin{align} \label{AS6}
	\Div \mathcal{B}[f] = f \ \mbox{in}\ Q \ \mbox{for any}\ f \in L^q(Q), \ \int_Q f = 0, \
	\mathcal{B}[f]|_{\partial Q} = 0, \br 
	\| \mathcal{B}[f]  \|_{W^{1,q}(Q; R^3)} \aleq \| f \|_{L^q(Q)},\ 1 < q < \infty.
	\end{align}
\item If, moreover, $f = \Div \vc{g}$, $\vc{g} \in L^r(Q; R^3)$, $\vc{g} \cdot \vc{n}|_{\partial Q} = 0$, then 
\begin{equation} \label{AS7}
	\| \mathcal{B}[\Div \vc{g} ] \|_{L^r(Q; R^3)} \aleq \| \vc{g} \|_{L^r(Q; R^3)},\ 1 < r < \infty.
	\end{equation}

	\end{itemize}

Now, consider the boundary vector 
\begin{equation} \label{AS8}
\vB_D = \vc{n} \times \vc{b}_\tau \ \mbox{on}\ \Gamma^\vB_D \ \Rightarrow 
\ 
\vB_D \times \vc{n} = \vc{b}_\tau \ \mbox{on}\ \Gamma^\vB_D. 
\end{equation}
First, let $\tvB_D$ be an arbitrary, not necessarily solenoidal, Lipschitz extension of $\vB_D$ inside $\Omega$. 
Consider a cut-off function 
\[
\chi \in \DC [0, 1), \ 0 \leq \chi \leq 1,\ 
\chi (z) = 1 \ \mbox{for}\ z \in [0, d), 0 < d < 1,
\] 
together with 
\[
\vB_{\delta, D}(t,x) = 
\chi \left( \frac{ {\rm dist}[x, \Gamma^\vB_D ] }{\delta} \right) \tvB_D (t,x), \ \delta > 0.
\]
It follows from \eqref{AS8} that 
\begin{align}
	\vB_{\delta,D} \times \vc{n} &= \vc{b}_\tau \ \mbox{on}\ \Gamma^\vB_D, \br 
	{\rm supp}[ \vB_{\delta, D} ] &\subset \mathcal{U}_{\delta_0} (\Gamma^\vB_D) 
	\ \mbox{for all}\ 0 < \delta \leq \delta_0, 
\label{AS9}
\end{align}
where $\mathcal{U}_{\delta_0}$ denotes the $\delta_0-$neighbourhood.

Next, fix $\delta_0$ so that $\mathcal{U}_{\delta_0}(\Gamma^\vB_D) \cap \Gamma^\vB_N = \emptyset$ and consider
\begin{equation} \label{AS10}
	\tvB_{\delta, D} = \left\{ \begin{array}{l} \vB_{\delta,D} - \mathcal{B}_{\delta_0} [\Div \vB_{\delta,D} ]
		\ \mbox{in}\  \mathcal{U}_{\delta_0} ( \Gamma^\vB_D ) \cap \Omega,
		\\ \\ 0 \ \mbox{in}\ \Omega \setminus \mathcal{U}_{\delta_0} ( \Gamma^\vB_D ) \end{array} \right., 
\end{equation}
where $\mathcal{B}_{\delta_0}$ is the Bogovskii operator defined on each component of the open set 
$\mathcal{U}_{\delta_0} ( \Gamma^{\vB}_D ) \cap \Omega$. It is easy to check that $\tvB_{\delta,D}$ enjoys the following properties:
\begin{align} 
\tvB_{\delta, D} \times \vc{n}|_{\Gamma^\vB_D} &= \vc{b}_\tau , \
\tvB_{\delta, D}|_{\Gamma^\vB_N} = 0,\br 
\Div \tvB_{\delta, D} &= 0 \ \mbox{in}\ \Omega.
	\label{AS11}
	\end{align}
In addition, as a consequence of boundedness of the Bogovskii operator \eqref{AS6}, 
\begin{equation} \label{AS12}
\| \tvB_{\delta, D} \|_{W^{1,p}(\Omega; R^3)} \leq c(p, \delta) \| \vc{b}_\tau \|_{W^{1, \infty}(\Gamma^\vB_D)},\ 
1 < p < \infty.	
	\end{equation}
Note we might have $c(p, \delta) \to \infty$ for $\delta \to 0$.
Finally, by virtue of \eqref{AS7}, 
\begin{equation} \label{AS13}
\| \tvB_{\delta, D} \|_{L^r(\Omega; R^3)} \leq o(r, \delta) \| \vc{b}_\tau \|_{W^{1, \infty}(\Gamma^\vB_D)},\ \mbox{where}\ o(r, \delta) \to 0 \ \mbox{as}\ \delta \to 0\ \mbox{for any}\ 1 < r < \infty.
	\end{equation} 

We consider the extension of the boundary data in the form 
\begin{equation} \label{AS14}
	\bB = \vB_{\delta,B} = \tvB_N + \tvB_{\delta, D}
	\end{equation} 
for a suitable $0 < \delta < \delta_0$ to be fixed below. By virtue of \eqref{AS5},  
\begin{equation} \label{AS15}
\Div \vB_{\delta,B} = 0,\ \Curl \vB_{\delta,B} = \Curl \tvB_{\delta, D},\ 
\ \vB_{\delta,B} \times \vc{n}|_{\Gamma^\vB_D} = \vc{b}_\tau,\ 
\vB_{\delta,B} \cdot \vc{n}|_{\Gamma^\vB_N} = b_\nu .
	\end{equation}

\begin{Remark} \label{RAS1}
	
	A similar ansatz for the function $\tvB_{\delta, D}$ was used by Zhang and Zhao \cite{ZhanZhao} based on the construction
	proposed by Alekseev \cite{Aleks}.
	
	\end{Remark}

\subsection{Ballistic energy inequality}

The proof of Theorem \ref{TAS1} is based on the ballistic energy inequality \eqref{w26} evaluated for 
$\tvt = \vtB$, $\bB = \vc{B}_{\delta, B}$ specified in Section \ref{EE}. In accordance with \eqref{AS15}, 
we get:
\begin{align}
	&\frac{\D }{\dt}	\intO{ \left( \frac{1}{2} \vr |\vu|^2 + \vr e(\vr, \vt) + \frac{1}{2} |\vB |^2 - \vtB \vr s(\vr, \vt) -  \vc{B}_{\delta, B} \cdot \vc{B}  \right) }  \br
	\quad&
	+ \intO{ 	\frac{\vtB}{\vt} \left( \mathbb{S}(\vt, \Grad \vu) : \Grad \vu - \frac{\vc{q}(\vt, \Grad \vt) \cdot \Grad \vt}{\vt} + \zeta (\vt) | \Curl \vB |^2 \right) }  \br
	&\quad \leq  - \intO{\left( \vr s (\vr, \vt) \partial_t \vtB + \vr s (\vr, \vt) \vu \cdot \Grad \vtB + \frac{\vc{q}(\vt, \Grad \vt)}{ \vt} \cdot \Grad \vtB                    \right)      }  \br
	&\quad - \intO{  \Big( \vB \cdot \partial_t \vB_{\delta,B} - (\vB \times \vu) \cdot \Curl \tvB_{\delta, D} -
		\zeta(\vt) \Curl \ \vB \cdot \Curl \tvB_{\delta, D} \Big)    }  \br 	
	&\quad + \intO{ \vr \Grad M \cdot \vu }. 
	\label{AS16}
\end{align}
in $\mathcal{D}' (T, \infty)$. Note that we may suppose $\Curl \tvB_{\delta, D} = 0$ if the boundary 
magnetic field is stationary.

\begin{Remark} \label{RB1} 
	
	Strictly speaking, the pointwise values of the ballistic energy  are defined only for a.a. $\tau \in (T, \infty)$. However, thanks to the inequality \eqref{AS16}, we may identify 
	the ballistic energy with its c\` agl\` ad representative 
\begin{align}
&\intO{ \left( \frac{1}{2} \vr |\vu|^2 + \vr e(\vr, \vt) + \frac{1}{2} |\vB |^2 - \vtB \vr s(\vr, \vt) -  \vc{B}_{\delta, B} \cdot \vc{B}  \right) (\tau, \cdot) } \br &\quad = {\rm ess} \lim_{t \to \tau -}
\intO{ \left( \frac{1}{2} \vr |\vu|^2 + \vr e(\vr, \vt) + \frac{1}{2} |\vB |^2 - \vtB \vr s(\vr, \vt) -  \vc{B}_{\delta, B} \cdot \vc{B}  \right) (t, \cdot) } 
\end{align}
defined for any $\tau > T$.  
	
\end{Remark}

Our goal is to show that all terms on the right--hand side of \eqref{AS16} can be controlled by the dissipation 
rate appearing on the left--hand side. This is relatively straightforward for the integrals independent of the 
density. 

\subsubsection{Coercivity of the dissipation rate}

It follows from coercivity of the transport coefficients stated in the hypotheses \eqref{w10}--\eqref{w12}, 
and hypothesis \eqref{AS3}  $\Gamma^\vu_D \ne \emptyset$ that 
\begin{align}
 &\intO{ 	\frac{\vtB}{\vt} \left( \mathbb{S}(\vt, \Grad \vu) : \Grad \vu - \frac{\vc{q}(\vt, \Grad \vt) \cdot \Grad \vt}{\vt} + \zeta (\vt) | \Curl \vB |^2 \right) } \br &\quad \ageq
 \inf \vtB \left[ \| \vu \|^2_{W^{1,2}(\Omega; R^3)} + \| \Grad \vt^{\frac{\beta}{2}} \|^2_{L^2(\Omega; R^3)} +
 \| \Grad \log \vt \|^2_{L^2(\Omega; R^3)} + \| \Curl \vB \|^2_{L^2(\Omega; R^3)} \right]. 	
\nonumber
	\end{align} 
In addition, as $\Gamma^\vt_D \ne \emptyset$, we may apply Poincar\' e inequality obtaining
\[
 \| \Grad \vt^{\frac{\beta}{2}} \|^2_{L^2(\Omega; R^3)} +
\| \Grad \log \vt \|^2_{L^2(\Omega; R^3)} \ageq  \| \vt^{\frac{\beta}{2}} \|^2_{W^{1,2}(\Omega)} +
\| \log \vt \|^2_{W^{1,2}(\Omega)} - c(\nb).
\]
Similarly, by virtue of inequality \eqref{w17},
\[
 \| \Curl \vB \|^2_{L^2(\Omega; R^3)} \ageq \| \vB \|_{W^{1,2}(\Omega; R^3)}^2 - c(\nb).
\]
Indeed we may use the extension $\vB_{\delta_0, B}$ of the boundary field constructed in Section 
\ref{E}, together with its projection $\widehat{\vB}_{\delta_0, B}$ on the space $\mathcal{H}^\perp$
to deduce
\begin{align}
\| \Curl \vB \|^2_{L^2(\Omega; R^3)} &= \| \Curl (\vB - \widehat{\vB}_{\delta_0, B}) \|^2_{L^2(\Omega; R^3)} - c_1(\nb) \br 
&\ageq \| \vB - \widehat{\vB}_{\delta_0, B} \|^2_{W^{1,2}(\Omega; R^3)} - c_1(\nb) \ageq 
\| \vB  \|^2_{W^{1,2}(\Omega; R^3)} - c_2(\nb). 
\nonumber
\end{align}
It is important to notice that the above estimate is independent of the value of the parameter $\delta$ to be fixed
later in the course of the proof.

Summing up the previous estimates we may infer that
\begin{align}
	&\left[ \| \vu \|^2_{W^{1,2}(\Omega; R^3)} + \|  \vt^{\frac{\beta}{2}} \|^2_{W^{1,2}(\Omega)} +
	\| \log \vt \|^2_{W^{1,2}(\Omega)} + \| \vB \|^2_{W^{1,2}(\Omega; R^3)} \right] \br &\quad 
	\leq c(\nb)\left[ 1 + \intO{ 	\frac{\vtB}{\vt} \left( \mathbb{S}(\vt, \Grad \vu) : \Grad \vu - \frac{\vc{q}(\vt, \Grad \vt) \cdot \Grad \vt}{\vt} + \zeta (\vt) | \Curl \vB |^2 \right) } \right].	
	\label{AS17}
\end{align}
We point out that hypothesis \eqref{AS3} plays a crucial role in this step. 

\subsubsection{Estimates of the integrals containing the magnetic field}

We use again hypothesis \eqref{AS3} together with \eqref{AS10} to perform integration by parts 
\[
\intO{ (\vB \times \vu) \cdot \Curl \tvB_{\delta, D} } = 
\intO{ \Curl  (\vB \times \vu) \cdot \tvB_{\delta, D} }, 
\]
where 
\begin{align}
\left| \intO{ \Curl  (\vB \times \vu) \cdot \tvB_{\delta, D} } \right| \leq 
\| \tvB_{\delta, D} \|_{L^3(\Omega; R^3)} \| \Curl  (\vB \times \vu) \|_{L^{\frac{3}{2}}(\Omega; R^3)} \br 
\aleq \| \tvB_{\delta, D} \|_{L^3(\Omega; R^3)} \left( \| \vB \|_{L^6(\Omega; R^3)} \| \Grad \vu \|_{L^2(\Omega; 
	R^{3 \times 3})} + \| \Grad \vB \|_{L^2(\Omega; R^{3 \times 3})} \| \vu \|_{L^6(\Omega; 
	R^{3})}    \right).
\nonumber
\end{align}
Consequently, by virtue of \eqref{AS13} and the Sobolev embedding $W^{1,2} \hookrightarrow L^6$, 
\begin{equation} \label{AS18}
\left| \intO{ (\vB \times \vu) \cdot \Curl \tvB_{\delta, D} } \right| \leq 
o(\delta, \nb ) \left( \| \vu \|_{W^{1,2}(\Omega; R^3)}^2 + \| \vB \|_{W^{1,2}(\Omega; R^3)}^2 \right),	
	\end{equation}
where $o(\delta, \nb) \to 0$ as $\delta \to 0$ uniformly for $t \in R$.

Similarly, using \eqref{AS12} for $p = 2$, we get 
\begin{equation} \label{AS19}
	\left| \intO{ \vB \cdot \partial_t \vB_{\delta, B} } \right| \leq 
\delta \| \vB \|_{L^2(\Omega; R^3)}^2 + c(\delta, \nb) \ \mbox{for any}\ \delta > 0,	
	\end{equation}
and, by the same token, 
\begin{align}
&\left| \intO{ \zeta(\vt) \Curl \ \vB \cdot \Curl \tvB_{\delta, D} } \right| \br &\quad \leq 
\delta \| \Curl \vB \|^2_{L^2(\Omega; R^3)} + c(\delta, \nb, r) \| \zeta(\vt) \|^2_{L^r(\Omega)} 
\ \mbox{for some}\ r > 2.
\nonumber
\end{align}
Furthermore, as $\frac{\beta}{2} > 3 (> 2)$ and the growth of $\zeta$ is restricted by \eqref{w12}, 
we may infer that 
\begin{equation} \label{AS20}
\left| \intO{ \zeta(\vt) \Curl \ \vB \cdot \Curl \tvB_{\delta, D} } \right| 
\leq \delta \left(  \| \Curl \vB \|^2_{L^2(\Omega; R^3)}  + 
\| \vt^{\frac{\beta}{2}} \|^2_{W^{1,2}(\Omega)}  \right) + c(\delta, \nb) 
\end{equation}
for any $0 < \delta \leq \delta_0$.	

Thus fixing $\delta > 0$ sufficiently small we may use \eqref{AS17}, together with the bounds 
\eqref{AS18}--\eqref{AS20} to rewrite inequality \eqref{AS16} in the form 
\begin{align}
	&\frac{\D }{\dt}	\intO{ \left( \frac{1}{2} \vr |\vu|^2 + \vr e(\vr, \vt) + \frac{1}{2} |\vB |^2 - \vtB \vr s(\vr, \vt) -  \vc{B}_{B} \cdot \vc{B}  \right) }  \br
	\quad&
	+ \frac{3}{4} \intO{ 	\frac{\vtB}{\vt} \left( \mathbb{S}(\vt, \Grad \vu) : \Grad \vu - \frac{\vc{q}(\vt, \Grad \vt) \cdot \Grad \vt}{\vt} + \zeta (\vt) | \Curl \vB |^2 \right) }  \br
	&\quad \leq  - \intO{\left( \vr s (\vr, \vt) \partial_t \vtB + \vr s (\vr, \vt) \vu \cdot \Grad \vtB + \frac{\vc{q}(\vt, \Grad \vt)}{ \vt} \cdot \Grad \vtB                    \right)      }  \br
	&\quad + \intO{ \vr \Grad M \cdot \vu } + c(\nb),
	\label{AS21}
\end{align}
where we have set $\bB = \vB_{\delta, B}$.

\subsubsection{Estimates of the integrals containing the density and temperature}

First, write
\begin{equation} \label{AS22}
\intO{ \vr \Grad M \cdot \vu } = \frac{\D }{\dt} \intO{ \vr M } - \intO{ \vr \partial_t M }.
\end{equation}
Next, in accordance with the choice of $\vtB$ in \eqref{AS4}, 
\begin{equation} \label{AS23}
\intO{ \frac{\vc{q}(\vt, \Grad \vt)}{ \vt} \cdot \Grad \vtB } = 
- \intO{ \frac{\kappa (\vt)}{\vt} \Grad \vt \cdot \Grad \vtB } = - \int_{\Gamma^\vt_D} \mathcal{K} (\vtB) \Grad \vtB \cdot \vc{n}\ \D \sigma,  	
	\end{equation}
where $\mathcal{K}'(\vt) = \frac{\kappa (\vt)}{\vt}$.

In view of \eqref{AS22}, \eqref{AS23}, the inequality \eqref{AS21} can be rewritten in the form 
\begin{align}
	&\frac{\D }{\dt}	\intO{ \left( \frac{1}{2} \vr |\vu|^2 + \vr e(\vr, \vt) + \frac{1}{2} |\vB |^2 - \vtB \vr s(\vr, \vt) -  \vc{B}_{B} \cdot \vc{B} - \vr M \right) }  \br
	\quad&
	+ \frac{3}{4} \intO{ 	\frac{\vtB}{\vt} \left( \mathbb{S}(\vt, \Grad \vu) : \Grad \vu - \frac{\vc{q}(\vt, \Grad \vt) \cdot \Grad \vt}{\vt} + \zeta (\vt) | \Curl \vB |^2 \right) }  \br
	&\quad \leq  - \intO{\Big( \vr s (\vr, \vt) \partial_t \vtB + \vr s (\vr, \vt) \vu \cdot \Grad \vtB          \Big)      }   + c(\nb).
	\label{AS24}
\end{align}

The integrals containing entropy can be handled in the same way as in \cite[Section 4.4]{FeiSwGw}. In accordance 
with hypothesis \eqref{w5}, 
\[
\vr s(\vr, \vt) = \vr \mathcal{S} \left( \frac{\vr}{\vt^{\frac{3}{2}}} \right) + \frac{4a}{3} \vt^3.
\]
On the one hand, as $s$ complies with the Third law of thermodynamics enforced by the hypotheses \eqref{w6}, \eqref{w7}, 
we have
\begin{equation} \label{AS25}
	\vr \mathcal{S} \left( \frac{\vr}{\vt^{\frac{3}{2}}} \right) \leq \vr \mathcal{S}(r) \ \mbox{provided} \ \frac{\vr}{\vt^{\frac{3}{2}}} 
	\geq r,\ \mbox{where}\ \mathcal{S}(r) \to 0 \ \mbox{as}\ r \to \infty.
\end{equation}
On the other hand, if $\vr < r \vt^{\frac{3}{2}}$, we get 
\begin{equation} \label{AS26}
	0 \leq \vr \mathcal{S} \left( \frac{\vr}{\vt^{\frac{3}{2}}} \right) 
	\leq c \left(1 + r \vt^{\frac{3}{2}} \left[ \log^+ (r \vt^{\frac{3}{2}} ) + \log^+(\vt) \right] \right), 
\end{equation}
cf. also \cite[Chapter 12, formula (12.35)]{FeiNovOpen}.

Thus we may conclude, exactly as in \cite[Section 4.4]{FeiSwGw}, 
\begin{align}
	&\frac{\D }{\dt}	\intO{ \left( \frac{1}{2} \vr |\vu|^2 + \vr e(\vr, \vt) + \frac{1}{2} |\vB |^2 - \vtB \vr s(\vr, \vt) -  \vc{B}_{B} \cdot \vc{B} - \vr M \right) }  \br
	\quad&
	+ \frac{1}{4} \intO{ 	\frac{\vtB}{\vt} \left( \mathbb{S}(\vt, \Grad \vu) : \Grad \vu - \frac{\vc{q}(\vt, \Grad \vt) \cdot \Grad \vt}{\vt} + \zeta (\vt) | \Curl \vB |^2 \right) }  \br
	&\quad \leq  \mathcal{S}(r) c_1(\nb) \intO{ \vr |\vu|       }   + c_2(r,\nb), \ \mathcal{S}(r) \to 0 
	\ \mbox{as}\ r \to \infty, 
	\label{AS27}
\end{align}
where $c_2(r, \nb) \to \infty$ as $r \to \infty$.

\subsection{Pressure estimates}

The integral on the right--hand side of \eqref{AS27} contains $\vr$ and as such is not directly controllable by 
the dissipation rate. Following the same strategy as in \cite{FeiSwGw} we evoke the \emph{pressure estimates} 
obtained by using the quantity 
\[
\bfphi = \psi (t) \mathcal{B} \left[ \vr^\alpha - \frac{1}{|\Omega|} \intO{ \vr^\alpha } \right]
\]
as a test function in the momentum balance \eqref{w25}, where $\mathcal{B}$ is the Bogovskii operator 
introduced in Section \ref{E}. As a consequence
of \eqref{AS6} and the Sobolev embedding $W^{1,p} \hookrightarrow L^\infty$, $p > 3$,
\begin{equation} \label{AS28}
 \left| \mathcal{B} \left[ \vr^\alpha - \frac{1}{|\Omega|} \intO{ \vr^\alpha } \right] \right| \leq c(m_0)
 \ \mbox{as soon as}\ \alpha < \frac{1}{3}.
\end{equation}

Following step by step the arguments of \cite[Section 4.6.1]{FeiSwGw} we deduce the inequality 
\begin{align}
\int_{\tau}^{\tau + 1} &\intO{ \vr^{\frac{5}{3} + \alpha} } \dt \leq c(m_0) \left[ 1 + 
\int_\tau^{\tau + 1} \| \vt \|^4_{L^4 (\Omega) }\dt    \right. \br
&\left( 1 + \sup_{t \in (\tau, \tau + 1)} \| \vr (t, \cdot) \|_{L^{\frac{5}{3}}(\Omega)} \right) 	
\int_{\tau}^{\tau+1} \| \vu \|^2_{W^{1,2}(\Omega; R^3)} \dt + \sup_{t \in (\tau, \tau + 1)} 
\intO{ \vr |\vu | } \br 
&+ \left. \int_{\tau}^{\tau + 1} \intO{ |\Curl \vB \times \vB | } \dt     \right],\ \alpha = \frac{1}{15},
	\label{AS29}
	\end{align}
which is nothing other than \cite[Section 4, formula (4.28)]{FeiSwGw} augmented by the contribution of the 
Lorentz force. 

\subsection{Proof of Theorem \ref{TAS1}}

The estimates \eqref{AS27}, \eqref{AS29} are the main tools for proving Theorem \ref{TAS1}. Denote 
\[
F(\vr, \vt, \vu, \vc{B} ) =  \frac{1}{2} \vr |\vu|^2 + \vr e(\vr, \vt) + \frac{1}{2} |\vB |^2 - \vtB \vr s(\vr, \vt) -  \vc{B}_{B} \cdot \vc{B} - \vr M.
\]
It follows from the structural restrictions imposed through the hypotheses \eqref{w1}--\eqref{w7} that 
\begin{equation} \label{AS30}
\frac{1}{\lambda} E(\vr, \vt, \vu, \vB) - c_1 (\lambda, \nb) \leq 
F(\vr, \vt, \vu, \vc{B} ) \leq \lambda E(\vr, \vt, \vu, \vB) + c_2 (\lambda, \nb)	
	\end{equation}
for any $\lambda > 1$. Similarly to \cite[Section 4.6]{FeiSwGw}, we show the following result. 

\begin{Lemma} \label{LAS1}
Under the hypotheses of Theorem \ref{TAS1}, 	
suppose 
\begin{equation} \label{AS31}
\intO{ F(\vr, \vt, \vu, \vc{B} ) (\tau, \cdot) } - 
\intO{ F(\vr, \vt, \vu, \vc{B} ) (\tau + 1, \cdot) } \leq K 	
	\end{equation} 
for some $\tau > T$, $K \geq 0$. 

Then there is $L = L(K, \nb)$ such that
\begin{equation} \label{AS32}
	\sup_{t \in (\tau, \tau + 1)} \intO{F (\vr, \vt, \vu, \vB)(t, \cdot)} \leq L(K, \nb).  	
	\end{equation}

	\end{Lemma}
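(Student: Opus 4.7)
The plan is to bootstrap from the hypothesis and the ballistic inequality \eqref{AS27} to a bound on $M := \sup_{t \in (\tau,\tau+1)} \mathcal{F}(t)$, where $\mathcal{F}(t) := \intO{F(\vr,\vt,\vu,\vB)(t,\cdot)}$; the pressure estimate \eqref{AS29} serves as the key intermediate step.

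Integrating \eqref{AS27} over $(\tau, \tau+1)$ and using the hypothesis together with Cauchy--Schwarz $\intO{\vr|\vu|}\leq \sqrt{2m_0\intO{E}} \leq c\sqrt{\lambda M + c(\nb)}$ (via \eqref{AS30}) produces
\[
\int_\tau^{\tau+1} \intO{\mbox{diss}}\dt \;\leq\; c(K,r,\nb) + \mathcal{S}(r) c(\nb)\sqrt{M},
\]
where $\mbox{diss}$ denotes the dissipation integrand of \eqref{AS27}. The coercivity estimate \eqref{AS17} thus controls $\int_\tau^{\tau+1} \bigl(\|\vu\|_{W^{1,2}}^2 + \|\vt^{\beta/2}\|_{W^{1,2}}^2 + \|\vB\|_{W^{1,2}}^2\bigr)\dt$ by the same quantity.

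Feeding these into \eqref{AS29}, each term on its right-hand side is controlled either by a subcritical power $M^{\gamma}$, $\gamma < 1$, or by a multiple of $\mathcal{S}(r) M$: Sobolev embedding together with $\beta>6$ for $\int\|\vt\|_{L^4}^4$; $\sup\|\vr\|_{L^{5/3}} \aleq M^{3/5}$ (from $\mathcal{F} \ageq \intO{\vr^{5/3}} - c(\nb)$) for the product $(1+\sup\|\vr\|_{L^{5/3}})\int\|\vu\|_{W^{1,2}}^2$; $\sup\intO{\vr|\vu|}\aleq \sqrt M$; and Cauchy--Schwarz $\int\!\int|\Curl\vB \times \vB| \aleq \sqrt{\int D \cdot M}$ for the Lorentz term. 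Using additionally the interpolation $\|\vr\|_{L^{3/2}} \leq c m_0^{1/6}\|\vr\|_{L^{5/3}}^{5/6}$ in $\intO{\vr|\vu|^2}\leq c M^{1/2}\|\vu\|_{W^{1,2}}^2$ to handle the kinetic energy, one similarly obtains
\[
\bar{\mathcal{F}} := \int_\tau^{\tau+1} \mathcal{F}(t)\dt \;\leq\; c(K,r,\nb)(1+M^{\gamma}) + \mathcal{S}(r) c(\nb) M.
\]

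To promote $\bar{\mathcal{F}}$ to a pointwise bound on $M$, use the one-sided inequality $\mathcal{F}(t) \leq \mathcal{F}(s) + \mathcal{S}(r) c(\nb)\sqrt M + c(r,\nb)$ for $\tau \leq s \leq t \leq \tau+1$, which follows from integrating \eqref{AS27} and the source bounds of the first step. Picking $s \in (\tau, \tau+1)$ with $\mathcal{F}(s) \leq \bar{\mathcal{F}}$ (mean value) controls $\mathcal{F}$ on $[s, \tau+1]$. For $t \in [\tau, s]$, the hypothesis $\mathcal{F}(\tau) \leq \mathcal{F}(\tau+1) + K$ combined with the one-sided estimate from $s$ to $\tau+1$ yields $\mathcal{F}(\tau) \leq \bar{\mathcal{F}} + K + \mbox{src}$, and a further application of the one-sided estimate on $[\tau, t]$ extends the bound backward. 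Altogether
\[
M \;\leq\; \bar{\mathcal{F}} + K + 2\,\mbox{src} \;\leq\; \mathcal{S}(r) c(\nb) M + c(K,r,\nb)(1+M^{\gamma}).
\]
Choosing $r$ so large that $\mathcal{S}(r) c(\nb) \leq 1/2$ (possible thanks to \eqref{w7}) and exploiting $\gamma<1$ closes the inequality and yields $M \leq L(K,\nb)$. The main obstacle is the careful bookkeeping that ensures every $M^{1}$ contribution on the right-hand side of the pressure estimate carries the small factor $\mathcal{S}(r)$; this hinges on the Third Law \eqref{w7} and on the density interpolation to a subcritical power $M^{1/2}$ in the kinetic energy estimate.
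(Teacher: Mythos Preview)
Your overall architecture matches the paper's proof: bound the dissipation via \eqref{AS27} and the hypothesis, feed this into the pressure estimate \eqref{AS29}, control the time-averaged ballistic energy $\bar{\mathcal F}$, promote the average to the supremum $M$ via the one-sided inequality plus the mean-value trick plus the hypothesis (this is exactly the paper's Step~2), and finally absorb the $\mathcal S(r)M$ term by choosing $r$ large. The promotion step you sketch is correct and identical to the paper's.

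There is, however, a genuine gap in your bookkeeping. You assert that every term on the right of \eqref{AS29} is bounded by either a subcritical power $M^\gamma$, $\gamma<1$, or by $\mathcal S(r)M$. This fails for the product
\[
\Big(1+\sup_{(\tau,\tau+1)}\|\vr\|_{L^{5/3}}\Big)\int_\tau^{\tau+1}\|\vu\|_{W^{1,2}}^2\dt
\;\aleq\; \big(1+M^{3/5}\big)\big(c(K,r,\nb)+\mathcal S(r)\,M^{1/2}\big),
\]
whose cross term is $\mathcal S(r)\,M^{11/10}$, i.e.\ \emph{supercritical}. Feeding this directly into $\bar{\mathcal F}$ (which requires $\int\!\!\intO{\vr^{5/3}}$, not $\int\!\!\intO{\vr^{5/3+\alpha}}$) leaves you with $M\leq c(K,r)+c(K,r)M^\gamma+\mathcal S(r)M^{11/10}$, which cannot be closed by choosing $r$ large, since $r$ must be fixed independently of $M$.

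The paper repairs this with one extra interpolation (its Step~5): using $\|\vr\|_{L^{5/3}}\leq c(m_0)\|\vr\|_{L^{5/3+\alpha}}^{\theta}$ between $L^1$ and $L^{5/3+\alpha}$ one gets
\[
\int_\tau^{\tau+1}\intO{\vr^{5/3}}\dt \;\leq\; c(m_0)\left(\int_\tau^{\tau+1}\intO{\vr^{5/3+\alpha}}\dt\right)^{10/11},\qquad \alpha=\tfrac{1}{15},
\]
and the exponent $10/11$ is precisely what brings $\big(\mathcal S(r)M^{11/10}\big)^{10/11}$ down to $\mathcal S(r)^{10/11}M$, which can now be absorbed. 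This is the actual reason for the choice $\alpha=1/15$ in \eqref{AS29}. Once you insert this interpolation before assembling $\bar{\mathcal F}$, your argument goes through and coincides with the paper's.
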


\begin{proof}
	
\medskip

\noindent{\bf Step 1:}	
	
If \eqref{AS31} holds, then the estimates \eqref{AS17}, \eqref{AS27} give rise to 
\begin{align} 
\int_\tau^{\tau + 1} &\left( \| \vu \|^2_{W^{1,2}(\Omega; R^3)} + \| \vt^{\frac{\beta}{2}} \|^2_{W^{1,2}(\Omega)} 
+ \| \log(\vt) \|^2_{W^{1,2}(\Omega)}
+ \| \vB \|^2_{W^{1,2}(\Omega; R^3)} \right) \dt 
\br &\leq c_1(\nb) \mathcal{S}(r) \int_{\tau}^{\tau + 1} \intO{ \vr |\vu| } \dt + 
c_2(r,K, \nb).
 \label{AS33}	
	\end{align}

Next,  
\begin{align}
\int_\tau^{\tau +1} &\left( \| \vt \|^4_{L^4(\Omega)} + \intO{ |\Curl \vB \times \vB | } \right) \dt \br 
&\aleq \left(1 + \int_\tau^{\tau +1} \left( \| \vt^{\frac{\beta}{2}} \|^2_{W^{1,2}(\Omega)}  
+ \| \vB \|^2_{W^{1,2}(\Omega; R^3)} \right) \dt \right) \br 
&\leq c(K, \nb) \left( 1 + \int_{\tau}^{\tau + 1} \intO{ \vr |\vu| } \dt \right),
\nonumber
\end{align}
where we have used \eqref{AS33} with $r = 1$.
Consequently, inequality \eqref{AS29} reduces to
\begin{align}
	&\int_{\tau}^{\tau + 1} \intO{ \vr^{\frac{5}{3} + \alpha} } \dt \br &\leq c(K, \nb) \left[ 1 + 
	  \left( 1 + \sup_{t \in (\tau, \tau + 1)} \| \vr (t, \cdot) \|_{L^{\frac{5}{3}}(\Omega)} \right) 	
	\int_{\tau}^{\tau+1} \| \vu \|^2_{W^{1,2}(\Omega; R^3)}\dt + \sup_{t \in (\tau, \tau + 1)} 
	\intO{ \vr |\vu | } \right],\br &\quad \alpha = \frac{1}{15}.
	\nonumber
\end{align}
In addition, using once more \eqref{AS33} with $r=1$, together with the estimates \eqref{ww} and \eqref{AS30}, 
we may infer that 
\begin{align}
	&\int_{\tau}^{\tau + 1} \intO{ \vr^{\frac{5}{3} + \alpha} } \dt \br &\leq c(K, \nb) \left[ 1 + 
	\left( \sup_{t \in (\tau, \tau + 1)} \intO{ | F(\vr, \vt, \vu, \vB | } \right)^{\frac{3}{5}} 	
	\int_{\tau}^{\tau+1} \| \vu \|^2_{W^{1,2}(\Omega; R^3)} \dt \right. \br 
	&+ \left.  \left( \sup_{t \in (\tau, \tau + 1)} \intO{ | F(\vr, \vt, \vu, \vB | } \right)^{\frac{1}{2}} \right],\br &\quad \alpha = \frac{1}{15}.
	\label{AS34}
\end{align}
Here, we have used H\" older's inequality to estimate 
\begin{equation} \label{AS34b}
\intO{ \vr |\vu| } \leq \| \sqrt{\vr} \|_{L^2(\Omega)} \| \sqrt{\vr} \vu \|_{L^2(\Omega; R^3)} 
\leq c(\nb) \left[ 1 + \left( \intO{ |F(\vr, \vt, \vu, \vB| } \right)^{\frac{1}{2}} \right].
\end{equation}

\medskip 

\noindent{\bf Step 2:}

Similarly to \cite[Section 4.6]{FeiSwGw}, under the hypothesis \eqref{AS31}, we claim that 
the pointwise values of the functional $ \intO{ F }$ are controlled by its average: 
\begin{equation} \label{AS35}
\sup_{t \in (\tau, \tau+1)} \intO{ F(\vr, \vt, \vu, \vB) (t, \cdot) } 
\leq c(K, \nb) \left[ 1 + \int_{\tau}^{\tau+1}\intO{ F(\vr, \vt, \vu, \vB) (t, \cdot) }  \dt \right].
\end{equation}
Indeed there is $\xi \in (\tau, \tau +1)$ such that 
\[
\intO{ F(\vr, \vt, \vu, \vB) (\xi, \cdot) } \leq \int_{\tau}^{\tau+1}\intO{ F(\vr, \vt, \vu, \vB) (t, \cdot) }  \dt.
\] 
Next, using \eqref{AS27} with $r = 1$ and \eqref{AS34b}, we get 
\[
\frac{\D }{\dt} \intO{F(\vr, \vt, \vu, \vB) }\leq c(\nb) \left[ 1 + \intO{F(\vr, \vt, \vu, \vB) } \right].
\] 
Consequently,  
\begin{align}
\intO{F(\vr, \vt, \vu, \vB) (s, \cdot) } &\leq \intO{ F(\vr, \vt, \vu, \vB) (\xi, \cdot) } + 
\int_\xi^s  \frac{\D }{\dt} \intO{F(\vr, \vt, \vu, \vB) } \br 
&\leq c (\nb) \left[ 1 + \int_{\tau}^{\tau+1}\intO{ F(\vr, \vt, \vu, \vB) (t, \cdot) }  \dt \right] 
\label{AS35a}
\end{align}
whenever $\xi \leq s \leq \tau+1$.
Finally, by virtue of hypothesis \eqref{AS31} and \eqref{AS35a}, 
\begin{align}
\intO{ F(\vr, \vt, \vu, \vB) (\tau , \cdot) } &\leq K + \intO{ F(\vr, \vt, \vu, \vB) (\tau +1 , \cdot) } \br
&\leq K + c(\nb) \left[ 1 + \int_{\tau}^{\tau + 1} \intO{ F(\vr, \vt, \vu, \vB) } \dt \right].
\nonumber
\end{align}
Consequently, we may repeat the same arguments as 
in \eqref{AS35a} replacing $\xi$ by $\tau$, which yields \eqref{AS35}.

\medskip 

\noindent {\bf Step 3:}

We use H\" older's inequality and Sobolev embedding theorem $W^{1,2} \hookrightarrow L^6$
to bound the integral on the right--hand side of \eqref{AS33},
\[
\intO{ \vr |\vu| } \leq \| \sqrt{\vr} \|_{L^2(\Omega)} \| \sqrt{\vr} \|_{L^3(\Omega)} \| \vu \|_{L^6(\Omega; R^3)} 
\leq c \sqrt{m_0} \| \sqrt{\vr} \|_{L^3(\Omega)} \| \vu \|_{W^{1,2}(\Omega; R^3)}.
\]
Consequently, going back to \eqref{AS33} we obtain
\begin{align} \label{AS37}
	\int_{\tau}^{\tau+1} &\left( \| \vu \|^2_{W^{1,2}(\Omega; R^3)} + 
	\| \vt^{\frac{\beta}{2}} \|_{W^{1,2}(\Omega)}^2 + \| \vB \|_{W^{1,2}(\Omega; R^3)}^2 \right)  \dt \br 
& \leq c_1(\nb) \mathcal{S}(r) \int_{\tau}^{\tau + 1} \| \vr \|_{L^{\frac{3}{2}}(\Omega)}
	+ c_2 (r, K, \nb),
\end{align}
where possibly $c_2 (r, K, \nb) \to \infty$ as $r \to \infty$.
In particular, inequality \eqref{AS34} may be rewritten in the form
\begin{align}
	&\int_{\tau}^{\tau + 1} \intO{ \vr^{\frac{5}{3} + \alpha} } \dt \br &\leq c_1(K, \nb) \left[ 1 + 
	\left( \sup_{t \in (\tau, \tau + 1)} \intO{ | F(\vr, \vt, \vu, \vB | } \right)^{\frac{3}{5}} 	
	\mathcal{S}(r) \int_{\tau}^{\tau+1} \| \vr \|_{L^{\frac{3}{2}} (\Omega)} \dt \right. \br 
	&+ \left.  c_2(r, K, \nb )\left( \sup_{t \in (\tau, \tau + 1)} \intO{ | F(\vr, \vt, \vu, \vB | } \right)^{\frac{3}{5}} \right],\ \alpha = \frac{1}{15}.
	\label{AS38}
\end{align}

\medskip 

\noindent {\bf Step 4:}

Using \eqref{AS37} we derive a bound on the kinetic energy: 
\begin{align}
	\int_{\tau}^{\tau + 1} & \intO{ \vr |\vu|^2 } \dt \leq \sup_{t \in (\tau, \tau + 1)} \| \vr \|_{L^{\frac{3}{2}}(\Omega)} 
	\int_{\tau}^{\tau + 1} \| \vu \|^2_{L^6(\Omega; R^3)} \dt \br &\aleq \sup_{t \in (\tau, \tau + 1)} \| \vr \|_{L^{\frac{3}{2}}(\Omega)} 
	\int_{\tau}^{\tau + 1} \| \vu \|^2_{W^{1,2}(\Omega; R^3)} \dt \br 
	&\leq c_1 (r,K, \nb) \sup_{t \in (\tau, \tau + 1)} \| \vr \|_{L^{\frac{3}{2}}(\Omega)} + 
	c_2(K, \nb) \mathcal{S}(r) \sup_{t \in (\tau, \tau + 1)} \| \vr \|_{L^{\frac{3}{2}}(\Omega)}\int_{\tau}^{\tau + 1} \| \vr \|_{L^{\frac{3}{2}}(\Omega)} \dt. \nonumber
\end{align}
Next, by interpolation, 
\[
\| \vr \|_{L^{\frac{3}{2}}(\Omega)} \leq \| \vr \|_{L^{\frac{5}{3}}(\Omega)}^{\frac{5}{6}} \| \vr \|_{L^1(\Omega)}^{\frac{1}{6}};
\]	
whence 
\begin{align} 
	&\int_{\tau}^{\tau + 1}  \intO{ \vr |\vu|^2 }  \br 
	&\leq c_1 (r,K, \nb) \sup_{t \in (\tau, \tau + 1)} \| \vr \|_{L^{\frac{5}{3}}(\Omega)}^{\frac{5}{6}} 
	+ c_2(K, \nb) \mathcal{S}(r) \sup_{t \in (\tau, \tau + 1)} \| \vr \|_{L^{\frac{5}{3}}(\Omega)}^{\frac{5}{6}} \int_{\tau}^{\tau + 1} \| \vr \|_{L^{\frac{5}{3}}(\Omega)}^{\frac{5}{6}} \dt \br 
	&\leq  c_1(K, \nb) \left[ 1 + c_2(r,K, \nb) \left( \sup_{t \in (\tau, \tau+1)} \intO{ |F(\vr, \vt, \vu, \vB) |} \right)^{\frac{1}{2}} \right. \br  &+ \left. \left( \sup_{t \in (\tau, \tau+1)} \intO{ |F(\vr, \vt, \vu, \vB) |} \right)^{\frac{1}{2}} \mathcal{S}(r) \int_{\tau}^{\tau + 1}   \left(  \intO{ |F(\vr, \vt, \vu, \vB) |} \right)^{\frac{1}{2}} \dt  \right]  \br 
	&\leq  c_1(K, \nb) \left[ 1 + c_2(r,K, \nb) \left( \sup_{t \in (\tau, \tau+1)} \intO{ |F(\vr, \vt, \vu, \vB) |} \right)^{\frac{1}{2}} \right. \br & + \left. \mathcal{S}(r)  \sup_{t \in (\tau, \tau+1)} \intO{ |F(\vr, \vt, \vu, \vB) |}   \right] .                                           
	\label{AS39}
\end{align}	

\medskip 

\noindent {\bf Step 5:}

We estimate the pressure/internal energy term by interpolation between $L^1$ and $L^{\frac{5}{3} + \alpha}$,
\[
\int_{\tau}^{\tau + 1} \intO{ \vr^{\frac{5}{3}} } \dt \leq c(m_0) \left( \int_{\tau}^{\tau + 1} \intO{ \vr^{\frac{5}{3} + \alpha } }
\dt \right)^{\frac{10}{11}} \ \mbox{for}\ \alpha = \frac{1}{15}.
\]
Going back to \eqref{AS38} we conclude 
\begin{align}
	&\int_{\tau}^{\tau + 1} \intO{ \vr^{\frac{5}{3}} } \dt \br &\leq c_1(K, \nb) \left[ 1 + 
	\left( \sup_{t \in (\tau, \tau + 1)} \intO{ | F(\vr, \vt, \vu, \vB | } \right)^{\frac{3}{5}} 	
	\mathcal{S}(r) \int_{\tau}^{\tau+1} \| \vr \|_{L^{\frac{3}{2}} (\Omega)} \dt \right. \br 
	&+ \left.  c_2(r,K, \nb) \left( \sup_{t \in (\tau, \tau + 1)} \intO{ | F(\vr, \vt, \vu, \vB | } \right)^{\frac{3}{5}} \right]^{\frac{10}{11}} \br 
	&\quad \quad \mbox{\{ similarly to \eqref{AS39} \} } \br 
	&\leq c_1(K, \nb) \left[ 1 + \mathcal{S}(r)
	\left( \sup_{t \in (\tau, \tau + 1)} \intO{ | F(\vr, \vt, \vu, \vB | } \right)^{\frac{3}{5} + \frac{1}{2}} 	
 \right. \br 
	&+ \left.  c_2(r, K, \nb)  \left( \sup_{t \in (\tau, \tau + 1)} \intO{ | F(\vr, \vt, \vu, \vB | } \right)^{\frac{3}{5}} \right]^{\frac{10}{11}} \br 
&\leq c_1(K, \nb) \left[ 1 + \mathcal{S}(r)
\sup_{t \in (\tau, \tau + 1)} \intO{ | F(\vr, \vt, \vu, \vB | } \right. \br  	
&+ \left. c_2(r,K, \nb) \left( \sup_{t \in (\tau, \tau + 1)} \intO{ | F(\vr, \vt, \vu, \vB | } \right)^{\frac{6}{11}} \right].
	\label{AS40}
\end{align}

\medskip 

\noindent {\bf Step 6:}

Using inequality \eqref{AS35}, and summing up \eqref{AS37} - \eqref{AS40} to estimate the integral on the right--hand side we have 
\begin{align}
\sup_{t \in (\tau, \tau+1)} &\intO{ F(\vr, \vt, \vu, \vB) (t, \cdot) } 
\leq \sup_{t \in (\tau, \tau+1)} \intO{ | F(\vr, \vt, \vu, \vB) | (t, \cdot) } \br
&\leq c(K, \nb) \int_{\tau}^{\tau + 1} 
\left[ 1 + \intO{ \vr |\vu|^2 } + \| \vt^{ \frac{\beta}{2}} \|^2_{W^{1,2}(\Omega)} + 
	 \intO{\vr^{\frac{5}{3}} } + \| \vB \|^2_{W^{1,2}(\Omega; R^3)} \right] \dt	\br 
	 &\leq c_1(K, \nb) \left[ 1 + c_2(r,K, \nb) \left( \sup_{t \in (\tau, \tau + 1)} \intO{ | F(\vr, \vt, \vu, \vB | } \right)^{\frac{6}{11}} \right. \br 
	 &+ \left. \mathcal{S}(r)\sup_{t \in (\tau, \tau + 1)} \intO{ | F(\vr, \vt, \vu, \vB | }  \right].
	\label{AS41}
	\end{align}
Thus choosing $r = r(K, \nb)$ so large that 
\[
c_1(K, \nb) \mathcal{S}(r) < 1 
\]
in \eqref{AS41} we obtain the desired conclusion \eqref{AS32}.

	\end{proof}

Having proved Lemma \ref{LAS1} we are ready to finish the proof of Theorem \ref{TAS1}. Suppose that 
\[ 
\limsup_{t \to T-} \intO{ E(\vr, \vt, \vu, \vB )(t, \cdot) } = \mathcal{E}_T < \infty. 
\]
In view of \eqref{AS30}, 
\[
\limsup_{t \to T-} \intO{ F(\vr, \vt, \vu, \vB )(t, \cdot) } \leq \mathcal{F}_T (\mathcal{E}_T, \nb) < \infty.
\]
Fix $K = 1$ in Lemma \ref{LAS1}. First we claim there is $\tau \geq T$, where $\tau$ depends only on 
$\mathcal{F}_T$ and $\nb$ such that 
\begin{equation} \label{AS42}
\intO{ F(\vr, \vt, \vu, \vB )(\tau, \cdot) } \leq L(1, \nb), 
\end{equation}	
where $L(1, \nb)$ is the bound in \eqref{AS32}. More precisely,  
\[
\tau - T \leq [ \mathcal{F}_T + |\Omega| \inf F ] + 1 
\]
since otherwise, in accordance with Lemma \ref{LAS1}, we would have 
\[
\intO{ F(\vr, \vt, \vu, \vB ) (\tau, \cdot) } < \intO{ \inf F }.
\]

Next, we claim the implication 
\[
\intO{ F(\vr, \vt, \vu, \vB) (\tau, \cdot) } \leq L(1, \nb) \ \Rightarrow 
\ \intO{ F(\vr, \vt, \vu, \vB) (\tau + 1, \cdot) } \leq L(1, \nb). 
\]
Indeed either 
\[
\intO{ F(\vr, \vt, \vu, \vc{B} ) (\tau, \cdot) } - 
\intO{ F(\vr, \vt, \vu, \vc{B} ) (\tau + 1, \cdot) } \leq 1
\]
and the desired conclusion follows from Lemma \ref{LAS1}, or 
\[
\intO{ F(\vr, \vt, \vu, \vc{B} ) (\tau + 1, \cdot) } < \intO{ F(\vr, \vt, \vu, \vc{B} ) (\tau, \cdot) } - 1 
< L(1, \nb).
\]

Thus we have a sequence 
\[
\tau + n,\ n = 0,1,\dots,\ 
\intO{ F(\vr, \vt, \vu, \vc{B} ) (\tau + n, \cdot) } \leq L(1, \nb)
\]
and the desired uniform bound 
\[
\sup_{t \geq \tau} \intO{ F(\vr, \vt, \vu, \vc{B} ) (t, \cdot) } \leq \mathcal{F}_\infty (\nb) 
\]
follows from \eqref{AS27} (with $r=1$) by the standard Gronwall argument. 
Clearly, in view of \eqref{AS30}, this yields a similar bound for $\intO{ E(\vr, \vt, \vu, \vB)}$ as 
claimed in Theorem \ref{TAS1}.

\section{Asymptotic compactness}
\label{AC}

The proof of asymptotic compactness basically mimicks the proof of sequential stability of global in time weak solutions 
with the necessary modification to ensure compactness of the density. 

We consider a family of data locally \emph{precompact} in the appropriate function spaces. Specifically, 
\begin{equation} \label{AC1}
0 < \underline{m}_0 \leq m_{0,n} \leq \Ov{m}_0,\ |\bfomega_n| \leq \omega,
	\end{equation}
\begin{equation} \label{AC2}
	\| G_n \|_{W^{1,\infty}(R \times \Omega)} \leq \Ov{G},\ 
	(G_n)_{n \geq 1} \ \mbox{precompact in}\ W^{1,\infty}([-N,N] \times \Omega) \ \mbox{for any}\ N > 0.
	\end{equation}
Similarly, 
\begin{align}
0 < \underline{\vt} \leq \vt_{B,n},\ 
&\sup_{t \in R} \left( \| \vt_{B,n} (t, \cdot) \|_{W^{2, \infty}(\Gamma^\vt_D)} + 
	\| \partial_t \vt_{B,n} (t, \cdot) \|_{W^{1, \infty}(\Gamma^\vt_D)} \right) \leq \Ov{\vt}, \br 
&(\vt_{B,n} )_{n \geq 1} \ \mbox{precompact in}\ C([-N, N]; C^2(\Gamma^\vt_D)) \ \mbox{for any}\ N > 0,\br 
&(\partial_t \vt_{B,n} )_{n \geq 1} \ \mbox{precompact in}\ C([-N, N]; C^1(\Gamma^\vt_D)) \ \mbox{for any}\ N > 0;	
	\label{AC3}
	\end{align}
\begin{align} 
	\sup_{t \in R} &\left( \| \vc{b}_{\tau,n} (t, \cdot) \|_{W^{2, \infty}(\Gamma^\vB_D; R^3)} + 
	\| \partial_t \vc{b}_{\tau,n}  (t, \cdot) \|_{W^{1, \infty}(\Gamma^\vB_D; R^3)} \right) \leq \Ov{B}_\tau, \br 
	&(\vc{b}_{\tau,n} )_{n \geq 1} \ \mbox{precompact in}\ C([-N, N]; C^2(\Gamma^\vB_D; R^3)) \ \mbox{for any}\ N > 0,\br 
	&(\partial_t \vc{b}_{\tau,n} )_{n \geq 1} \ \mbox{precompact in}\ C([-N, N]; C^1(\Gamma^\vB_D; R^3)) \ \mbox{for any}\ N > 0;	
	\label{AC4}
\end{align}
and
\begin{align} 
	\sup_{t \in R} &\left( \| {b}_{\nu,n} (t, \cdot) \|_{W^{2, \infty}(\Gamma^\vB_N)} + 
	\| \partial_t \vc{b}_{\nu,n}  (t, \cdot) \|_{W^{1, \infty}(\Gamma^\vB_N)} \right) \leq \Ov{B}_\nu, \br 
	&({b}_{\nu,n} )_{n \geq 1} \ \mbox{precompact in}\ C([-N, N]; C^2(\Gamma^\vB_N)) \ \mbox{for any}\ N > 0,\br 
	&(\partial_t {b}_{\nu,n} )_{n \geq 1} \ \mbox{precompact in}\ C([-N, N]; C^1(\Gamma^\vB_N)) \ \mbox{for any}\ N > 0.	
	\label{AC5}
\end{align}

\begin{mdframed}[style=MyFrame]
	
	\begin{Theorem} \label{TAC1} {\bf [Asymptotic compactness]}
		
		Let the hypotheses of Theorem \ref{TAS1} hold. Suppose that $(\vrn, \vtn, \vun, \vB_n )_{n \geq 1}$ is a 
		sequence of weak solutions to the compressible MHD system on $(T_n, \infty)$, $T_n \geq - \infty$, $T_n \to - \infty$, 
		with the data 
		\[
		(m_{0,n}, \omega_n, G_n, \vt_{B,n}, \vc{b}_{\tau,n}, b_{n,n} )_{n\geq 1}
		\] specified in 
		\eqref{AC1}--\eqref{AC5}, 
		\[
		\intO{ \vrn } = m_{0,n},\ \intO{ \vB_n \cdot \vc{h} } = 0 
		\ \mbox{for all}\ \vc{h} \in \mathcal{H}(\Omega; R^3).
		\]
		In addition, suppose 
		\[
		\limsup_{\tau \to T_n+ } \intO{ E(\vrn, \vtn, \vun, \vB_n)(\tau, \cdot) } < \mathcal{E}_0 < \infty 
		\]
		uniformly for all $n=1,2,\dots$.
		
		Then there is a subsequence (not relabelled here) such that 
		\begin{align} 
		\vrn &\to \vr \ \mbox{in}\ C_{\rm weak}([-N,N]; L^{\frac{5}{3}}(\Omega)) 
		\ \mbox{and in}\ C([-N,N]; L^1(\Omega)), \br 
		\vtn &\to \vt \ \mbox{in}\ L^q ([-N,N]; L^{4}(\Omega)) \ \mbox{and weakly in}
		\ L^2(-N,N; W^{1,2}(\Omega)),\ 1 \leq q < \infty, \br 
		\vun &\to \vu \ \mbox{weakly in}\  L^2(-N,N; W^{1,2}(\Omega; R^3)), \br
		\vB_n &\to \vB \ \mbox{in}\ L^2(-N,N; L^2(\Omega; R^3)) \ \mbox{and weakly in}\ 
		L^2(-N,N; W^{1,2}(\Omega; R^3)),  	
			\label{AC6}
			\end{align}
and 
\begin{align}
	m_{0,n} &\to m_0 > 0,\ \bfomega_n \to \bfomega, \br 
G_n &\to G 	\ \mbox{in} \ W^{1,\infty}([-N,N] \times \Omega), \br 
\vt_{n,B}&\to \vt_B \ \mbox{in}\  C([-N, N]; C^2(\Gamma^\vt_D)) ,\ \inf_{R \times \Omega} \vtB > 0, \br 
\vc{b}_{\tau, n} &\to \vc{b}_\tau \ \mbox{in}\ C([-N, N]; C^2(\Gamma^\vB_D; R^3)), \br
b_{\nu,n} &\to b_\nu \ \mbox{in}\  C([-N, N]; C^2(\Gamma^\vB_N))
	\label{AC7} 
\end{align}		
for any $N > 0$, where $(\vr, \vt, \vu, \vB)$ is a weak solution of the compressible MHD system with the data 
$(m_0, \omega, G, \vtB, \vc{b}_\tau, b_\nu)$ defined in $R \times \Omega$ (entire solution) satisfying 
\begin{equation} \label{AC8}
	\intO{ E(\vr, \vt, \vu, \vB) (\tau, \cdot) } \leq \mathcal{E}_\infty 
	\ \mbox{for all}\ \tau \in R.
	\end{equation}	
		\end{Theorem}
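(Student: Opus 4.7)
The plan is to combine the uniform bounds of Theorem \ref{TAS1} with standard weak--strong compactness methods for the compressible Navier--Stokes--Fourier system, supplemented by compactness for the induction equation. Since the data are uniformly bounded in the norms \eqref{AC1}--\eqref{AC5}, the constant $\mathcal{E}_\infty$ from Theorem \ref{TAS1} is uniform in $n$, and the absorbing time $\tau$ depends only on $\mathcal{E}_0$ and the uniform bound on $\nb$. Because $T_n \to -\infty$, for every fixed $N$ all sufficiently large $n$ satisfy $T_n + \tau < -N$, so
\[
\sup_{n \text{ large}}\ \sup_{t \in [-N,N]} \intO{ E(\vrn, \vtn, \vun, \vB_n)(t, \cdot) } \leq \mathcal{E}_\infty.
\]

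From this bound and the dissipation estimate \eqref{AS17}, I extract the usual a priori bounds: $\vrn$ bounded in $L^\infty_{\rm loc}(\mathbb{R}; L^{5/3}(\Omega))$; $\sqrt{\vrn}\vun$ in $L^\infty_{\rm loc}(\mathbb{R}; L^2)$; $\vun$ in $L^2_{\rm loc}(\mathbb{R}; W^{1,2})$; $\vtn$ in $L^\infty_{\rm loc}(\mathbb{R}; L^4)$ together with $\vtn^{\beta/2}$ and $\log \vtn$ in $L^2_{\rm loc}(\mathbb{R}; W^{1,2})$; and $\vB_n$ in $L^\infty_{\rm loc}(\mathbb{R}; L^2) \cap L^2_{\rm loc}(\mathbb{R}; W^{1,2})$ via \eqref{w17} and the normalization $\vB_n \in \mathcal{H}^\perp$. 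The pressure estimate \eqref{AS29}, applied on each unit subinterval, improves this to $\vrn \in L^{5/3+1/15}_{\rm loc}$. A diagonal extraction in $N$, combined with the hypotheses \eqref{AC1}--\eqref{AC5}, yields the weak convergences listed in \eqref{AC6} and the data convergences \eqref{AC7}.

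Next I would pass to the limit in the linear parts. The continuity equation bounds $\partial_t \vrn = -\Div(\vrn \vun)$ in a negative Sobolev norm; Aubin--Lions then gives $\vrn \to \vr$ in $C_{\rm weak}([-N,N]; L^{5/3})$ and, via a further application using $\vrn \vun$ bounded in $L^\infty_{\rm loc}(L^{5/4})$, pre-compactness of $\vrn \vun$ in $C_{\rm weak}([-N,N]; L^{5/4})$. For the magnetic field, a time-derivative bound on $\vB_n$ from \eqref{w23}, combined with the compact embedding $W^{1,2} \subset\subset L^2$ and the fact that $\vB_n - \vc{B}_{B,n}$ lies in $H_{0,\sigma}$ with $\vc{B}_{B,n}$ converging strongly by \eqref{AC4}--\eqref{AC5}, yields $\vB_n \to \vB$ strongly in $L^2_{\rm loc}(L^2)$, so $\vB_n \otimes \vB_n$ and $\vB_n \times \vun$ are identified in the limit. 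Strong convergence of $\vtn$ in $L^q_{\rm loc}(L^4)$ follows from the entropy inequality: the time derivative of $\vrn s(\vrn, \vtn)$ is controlled in a negative norm by the entropy production together with the convective flux, and standard Div--Curl and monotonicity arguments on the temperature, as in \cite[Chapter 12]{FeiNovOpen}, upgrade this to a.e.\ convergence. This also suffices to identify $\mathbb{S}(\vtn, \Grad \vun)$, $\kappa(\vtn)\Grad \vtn$, and $\zeta(\vtn)\Curl \vB_n$ in the limit.

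The decisive step, and the principal obstacle, is strong convergence of $\vrn$, needed to identify the pressure $p(\vrn, \vtn)$ in the momentum balance. I would adapt the effective-viscous-flux method of Lions and Feireisl: testing the momentum equation \eqref{w20} against $\bfphi = \psi(t)\chi(x)\,\Grad \Delta^{-1}[\mathbf{1}_\Omega \vrn^\alpha]$ for a small $\alpha > 0$ and a cut-off $\chi$, one derives in the limit the identity
\[
\overline{p(\vr,\vt)\,\vr^\alpha} - \Big(\tfrac{4}{3}\mu(\vt)+\eta(\vt)\Big)\overline{\vr^\alpha\,\Div \vu} \ = \ \overline{p(\vr,\vt)}\ \overline{\vr^\alpha} - \Big(\tfrac{4}{3}\mu(\vt)+\eta(\vt)\Big)\,\overline{\vr^\alpha}\,\Div \vu,
\]
where bars denote weak limits. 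The two new terms arising from the MHD coupling contribute no defect: the Lorentz force $\Curl \vB_n \times \vB_n$ passes to the limit by strong convergence of $\vB_n$ in $L^2_{\rm loc}(L^2)$ together with $\Curl \vB_n \rightharpoonup \Curl \vB$ in $L^2$; the Coriolis term $\bfomega_n \times \vrn\vun$ is handled by weak continuity of $\vrn\vun$ and $\bfomega_n \to \bfomega$. Propagating the oscillation defect measure of $\vrn$ in $L^q$ (for some $q$ slightly larger than $8/3$, admissible by \eqref{ww}) through the renormalized continuity equation \eqref{w19}, and combining it with the effective-viscous-flux identity and monotonicity of $\vr \mapsto p(\vr,\vt)$ from \eqref{w4}, yields $\vrn \to \vr$ a.e. Passage to the limit in the entropy inequality \eqref{w25} and the ballistic energy inequality \eqref{w26} is then by weak lower semicontinuity of the relevant convex functionals. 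Finally, \eqref{AC8} follows by applying Theorem \ref{TAS1} at each fixed $\tau$ to $(\vrn,\vtn,\vun,\vB_n)$ for $n$ large and using weak lower semicontinuity of $\intO{E(\cdot)}$ along the subsequence.
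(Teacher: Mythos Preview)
Your outline is largely on target and matches the paper's strategy: uniform energy bounds from Theorem~\ref{TAS1}, Aubin--Lions for $\vB_n$ so that the Lorentz force $\Curl \vB_n \times \vB_n$ passes to the limit, and then the Lions--Feireisl effective viscous flux machinery for the density. The paper explicitly isolates the weak convergence of $\Curl \vB_n \times \vB_n$ in $L^{3/2}$ as the one MHD--specific ingredient needed, just as you do.

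There is, however, a real gap in your density argument. The oscillation--defect method you describe shows that the defect functional $\intO{(\overline{\vr\log\vr} - \vr\log\vr)}$ is non--increasing in time, and then concludes it is identically zero because it vanishes at the initial instant by strong convergence of the initial densities. That final step is unavailable here: on a window $[-N,N]$ the ``initial'' values $\vrn(-N,\cdot)$ are traces of solutions launched at $T_n \to -\infty$, and there is no reason they converge strongly in any $L^p$. The paper flags exactly this as ``the only missing piece of information'' and resolves it by invoking the refined analysis of propagation of density oscillations from \cite{FP15} and \cite{FanFeiHof}. The point of those works is that the defect does not merely fail to grow but actually \emph{decays} in time at a rate controlled by the uniform energy bounds; since the solutions here live on $(T_n,\infty)$ with $T_n \to -\infty$, the defect at any fixed $\tau$ has had arbitrarily long to decay and must therefore vanish. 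Without this strengthening your argument does not close, and you should cite and use \cite{FP15}, \cite{FanFeiHof} at this step rather than the standard existence--proof version.
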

	
	\end{mdframed}

As already pointed out, the proof of Theorem \ref{TAC1} is based on ideas similar to the proof of sequential stability 
of weak solutions on a \emph{compact} time interval $(-N,N)$. Here the necessary energy bounds are provided by 
Theorem \ref{TAS1}; whence the only missing piece of information is the strong convergence of the densities
at the ``initial'' time $t = -N$. Indeed compactness 
of the sequence $(\vrn )_{n \geq 1}$ in the existence proof proposed by Lions \cite{LI4} or \cite{EF70} is conditioned by compactness of the ``initial'' density distributions. In the present setting, however, 
the ``initial'' state is {\it a priori} not given. 

Fortunately, this problem can be solved by careful analysis 
of propagation of the density oscillations in time elaborated in \cite{FP15} and further developed in 
\cite{FanFeiHof} in the context of inhomogeneous velocity boundary conditions. Adaptation to the present setting 
is straightforward as soon as we observe compactness of the Lorentz force in the momentum equation, specifically 
\[
\Curl \vB_n \times \vB_n \to 
\Curl \vB \times \vB \ \mbox{weakly in}\ L^{\frac{3}{2}}((-N,N) \times \Omega; R^3). 
\]
However, given the available energy bounds, this can be shown by the standard Aubin--Lions argument. The remaining 
compactness properties necessary to establish the convergence in \eqref{AC6} are well understood 
and elaborated in detail in \cite{DUFE2}.  

\section{Concluding discussion, applications}
\label{AP}

Suppose that the gravitational potential $G$ as well as the boundary data $\vtB$, $\vc{b}_\tau$, $b_\nu$ are independent of the time. Under the hypotheses of Theorem \ref{TAS1}, the set 
\begin{align}
	\mathcal{A} = \Big\{ (\vr, \vt, \vu, \vB) \ \Big| & 
	(\vr, \vt, \vu, \vB) - \mbox{solution of the compressible (MHD) system for}\ t \in (-\infty, \infty) \br
	& \intO{ E(\vr, \vt, \vu, \vB ) } < \mathcal{E}_\infty \Big\}
	\nonumber
\end{align}
is non--empty, time--shift invariant, meaning 
\[
(\vr, \vt, \vu, \vB) \in \mathcal{A} \ \Rightarrow \ (\vr, \vt, \vu, \vB) (\cdot + T) \in \mathcal{A},  
\ T \in R,
\]
and compact with respect to the topologies specified in \eqref{AC6}.

\subsection{Convergence to equilibria}

A particular situation occurs when $\mathcal{A}$ is a singleton containing the unique stationary solution 
of the problem. As a direct consequence of Theorem \ref{TAC1} we obtain unconditional convergence to equilibrium 
for \emph{any} weak solution of the compressible MHD system. 

As an example, consider the situation:
\begin{itemize}
	\item $\vtB > 0$ is a positive constant on $\Gamma^\vt_D \ne \emptyset$;
	\item 
	\[
	\vc{b}_{\tau} = \bB \times \vc{n}|_{\Gamma^\vB_D},\ 
	b_\nu = \bB \cdot \vc{n}|_{\Gamma^\vB_N}, 
	\]
	where 
	\[
	\Curl \bB = \Div \bB = 0 \ \mbox{in} \ \Omega.
	\]
\end{itemize}
Accordingly, the ballistic energy inequality \eqref{AS16} yields
\begin{align}
	&\frac{\D }{\dt}	\intO{ \left( \frac{1}{2} \vr |\vu|^2 + \vr e(\vr, \vt) + \frac{1}{2} |\vB |^2 - \vtB \vr s(\vr, \vt) -  \vr M \right) }  \br
	\quad&
	+ \intO{ 	\frac{\vtB}{\vt} \left( \mathbb{S}(\vt, \Grad \vu) : \Grad \vu - \frac{\vc{q}(\vt, \Grad \vt) \cdot \Grad \vt}{\vt} + \zeta (\vt) | \Curl \vB |^2 \right) }  
	 \leq 0.
	\label{AP1}
\end{align}	

Integrating \eqref{AP1} in time we deduce that any solution belonging to $\mathcal{A}$ necessarily satisfies 
\[
\vu \equiv 0, \ \vt \equiv \vtB ,\ \partial_t \vr = 0,\ \Div \vB = \Curl \vB = 0.
\]
Moreover, it turns out that $\vB$ is uniquely determined by the boundary conditions and the hypothesis 
\[
\intO{ \vc{B} \cdot \vc{h} } = 0 \ \mbox{for all}\ \vc{h} \in \mathcal{H}(\Omega).
\]

Finally, $\vr = \vr(x)$ solves the static problem 
\[
\Grad p(\vr, \vtB) = \vr \Grad M  
\]
that admits a unique solution determined by the mass constraint 
\[
\intO{ \vr } = m_0
\]
as long as $P'(0) > 0$ in \eqref{w2}, see \cite{FP20}.

\subsection{Systems with unbounded energy}

As mentioned in the introduction, the result claimed in Theorem \ref{TAS1} cannot hold if the fluid system is thermally isolated, meaning 
\begin{equation} \label{AP2}
\Gamma^\vt_D = \emptyset.
\end{equation}
Indeed it follows from the entropy inequality \eqref{w25} that the total entropy 
\[
\tau \mapsto \intO{ \vr s(\vr, \vt) (\tau, \cdot) } 
\]
is a non--decreasing function, which precludes the existence of a bounded absorbing set 
depending solely on the data specified in \eqref{AS1}.

What is more, we show that \eqref{AP2} may give rise to trajectories with unbounded energy. 
Indeed suppose that there is a global--in--time solution $(\vr, \vt, \vu, \vB)$ and a sequence of times 
$\tau_n \to \infty$ such that 
\begin{equation} \label{AP3}
	\sup_{n \geq 1} \intO{ E(\vr, \vt, \vu, \vB)(\tau_n, \cdot) } < \infty.
	\end{equation}
It follows from the constitutive restrictions \eqref{ww}  	
that boundedness of total energy implies boundedness of the total entropy $\intO{ \vr s(\vr, \vt) (\tau_n, \cdot)}$. However, by virtue of the no--flux boundary conditions induced by \eqref{AP2}, the total entropy 
is a non--decreasing function of the time. Consequently, we may infer that 
\begin{equation} \label{AP4}
	\int_T^\infty \intO{ \frac{1}{\vt} \left( \mathbb{S}(\vt, \Grad \vu) : \Grad \vu - \frac{\vc{q}(\vt, \Grad \vt) \cdot \Grad \vt}{\vt} + \zeta(\vt) |\Curl \vB |^2 \right) } \dt < \infty.
		\end{equation} 
	
In particular, it follows from \eqref{AP4} that 
\begin{equation} \label{AP5}
	\int_{\tau_n}^{\tau_n + 1} \intO{ \frac{1}{\vt} \left( \mathbb{S}(\vt, \Grad \vu) : \Grad \vu + 
\zeta(\vt) |\Curl \vB |^2 \right) } \dt \to 0 \ \mbox{as}\ n \to \infty.
\end{equation} 				
Consequently, introducing the time shifts 
\[
\vun = \vu(\cdot + \tau_n),\ \vB_n = \vB( \cdot + \tau_n ),
\]
we deduce 
\[
\vu_n \to 0 \ \mbox{in}\ L^2(0,1; W^{1,2}(\Omega; R^3),\ 
\vB_n \to \widehat{\vB} \ \mbox{in}\ L^2(0,1; L^2(\Omega; R^3)) \ \mbox{and weakly in}\ 
L^2(0,1; W^{1,2}(\Omega; R^3)).
\]
As the limits satisfy the induction equation \eqref{p3}, we conclude 
\begin{equation} \label{AP6}
	\widehat{\vB}  = \widehat{\vB} (x),\ \Div \widehat{\vB} = 0, \ \Curl \widehat{\vB} = 0 \ \mbox{in}\ \Omega,\ 
	\widehat{\vB} \times \vc{n}|_{\Gamma^\vB_D} = \vc{b}_{\tau},\ 
	\widehat{\vB} \cdot \vc{n}|_{\Gamma^\vB_N} = b_\nu.
\end{equation}	
Thus the boundary data $b_\nu$, $\vc{b}_\tau$ must be stationary in the sense of Definition \ref{Dew1}.
Otherwise, the conclusion contradicts \eqref{AP3}.

Using the necessary condition for stationarity \eqref{p20a} we obtain the following result.

\begin{mdframed}[style=MyFrame]

\begin{Theorem} \label{TAP1} {\bf [Systems with unbounded energy]}
	
Suppose 
\[
\Gamma^\vu_D \ne \emptyset,\ \Gamma^\vt_D = \emptyset,
\]
and $b_\nu$, $\vc{b}_\tau$ independent of $t$, 
\[
\Gamma^\vB_D \ne \emptyset,\ \mbox{where}\ \vc{b}_\tau \cdot \vc{n} = 0,\ 
{\rm div}_\tau \vc{b}_\tau \not\equiv 0.
\]

Then 
\[
\intO{ E(\vr, \vt, \vu, \vB)(\tau, \cdot) } \to \infty \ \mbox{as}\ \tau \to \infty
\]
for any global--in--time weak solution to the compressible MHD system.

\end{Theorem}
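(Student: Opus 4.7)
The plan is to argue by contradiction, formalizing the heuristic sketched immediately before the statement. Suppose there exists a global-in-time weak solution $(\vr, \vt, \vu, \vB)$ together with a sequence $\tau_n \to \infty$ along which
\[
\sup_n \intO{ E(\vr, \vt, \vu, \vB)(\tau_n, \cdot) } < \infty,
\]
and derive a contradiction with the hypothesis $\mathrm{div}_\tau \vc{b}_\tau \not\equiv 0$ on $\Gamma^\vB_D$.

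First I would observe that since $\Gamma^\vt_D = \emptyset$, the entropy inequality \eqref{w25} tested with a temporal cut-off and $\varphi \equiv 1$ in space yields that $\tau \mapsto \intO{\vr s(\vr, \vt)(\tau, \cdot)}$ is non-decreasing. Combining this monotonicity with the structural bound $\vr s \aleq \vt^3 + \vr(1 + [\log \vr]^+ + [\log \vt]^+)$ from \eqref{ww} and the a priori control furnished by boundedness of $E$ along $\tau_n$, I obtain a uniform upper bound on the total entropy over $(T, \infty)$. Consequently the non-negative entropy production is integrable in time, i.e. \eqref{AP4} holds. In particular \eqref{AP5} yields
\[
\int_{\tau_n}^{\tau_n + 1} \intO{ \frac{1}{\vt}\Bigl( \mathbb{S}(\vt, \Grad \vu):\Grad \vu + \zeta(\vt) |\Curl \vB|^2 \Bigr) } \dt \to 0.
\]

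Next, I would introduce the time shifts $\vun = \vu(\cdot + \tau_n)$, $\vB_n = \vB(\cdot + \tau_n)$, $\vrn = \vr(\cdot + \tau_n)$, $\vtn = \vt(\cdot + \tau_n)$ on the unit interval $(0,1)$. The uniform energy bound, together with \eqref{AP5}, the lower bounds on the transport coefficients \eqref{w10}--\eqref{w12}, the Korn--Poincar\'e inequality that is available since $\Gamma^\vu_D \ne \emptyset$, and the Poincar\'e-type inequality \eqref{w17} applied to $\vB_n - \bB$ (projected onto $\mathcal{H}^\perp$), imply
\[
\vun \to 0 \ \text{in } L^2(0,1; W^{1,2}(\Omega; R^3)), \qquad \vB_n \to \widehat{\vB} \ \text{in } L^2(0,1; L^2(\Omega; R^3))
\]
and $\vB_n \to \widehat{\vB}$ weakly in $L^2(0,1; W^{1,2}(\Omega; R^3))$, along a subsequence; the limit $\widehat{\vB}$ moreover satisfies $\Curl \widehat{\vB} = 0$ in $(0,1) \times \Omega$. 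Using these convergences in the weak formulation of the induction equation \eqref{w23} for the shifts, and exploiting that $\vB_n \times \vun \to 0$ in $L^1$ by the velocity decay together with the uniform $L^\infty_t L^2_x$ bound on $\vB_n$, I can pass to the limit and deduce $\partial_t \widehat{\vB} = 0$ weakly. Combined with the time-independent boundary data \eqref{p10}, \eqref{p11}, this yields \eqref{AP6}: $\widehat{\vB} = \widehat{\vB}(x)$ is solenoidal and irrotational with $\widehat{\vB} \times \vc{n}|_{\Gamma^\vB_D} = \vc{b}_\tau$, $\widehat{\vB} \cdot \vc{n}|_{\Gamma^\vB_N} = b_\nu$.

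Finally, the existence of such $\widehat{\vB}$ means that the boundary data $(\vc{b}_\tau, b_\nu)$ are stationary in the sense of Definition \ref{Dew1}. The compatibility relation \eqref{p20a} of Auchmuty--Alexander then forces $\mathrm{div}_\tau \vc{b}_\tau = 0$ on $\Gamma^\vB_D$, which directly contradicts the standing assumption $\mathrm{div}_\tau \vc{b}_\tau \not\equiv 0$. Hence no such sequence $\tau_n$ exists and $\intO{E(\tau, \cdot)} \to \infty$. The main technical obstacle is the second step, namely upgrading the vanishing dissipation \eqref{AP5} to strong convergence of $\vun$ and $\vB_n$ on $(0,1)$ in spaces strong enough to pass to the limit in the bilinear term $\vB_n \times \vun$ of the induction equation; everything else is a bookkeeping exercise once the contradiction target (stationarity of $\vc{b}_\tau$) is identified.
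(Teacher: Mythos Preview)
Your proposal is correct and follows essentially the same contradiction argument as the paper: bounded energy along $\tau_n$ plus entropy monotonicity (available since $\Gamma^\vt_D=\emptyset$) forces integrable entropy production \eqref{AP4}, whence the time-shifted dissipation vanishes, the shifted velocities converge to zero, the shifted magnetic fields converge to a curl-free solenoidal limit satisfying the boundary data, and one concludes stationarity of $(\vc{b}_\tau,b_\nu)$ in contradiction with \eqref{p20a}. One small remark: you invoke a ``uniform $L^\infty_t L^2_x$ bound on $\vB_n$'' to handle the bilinear term, but this is not immediately available since the energy is only assumed bounded at the discrete times $\tau_n$; however, the strong convergence $\vB_n\to\widehat{\vB}$ in $L^2(0,1;W^{1,2})$ that you already obtain from $\Curl\vB_n\to 0$ and \eqref{w17} is more than sufficient to pass to the limit in $\vB_n\times\vu_n$, so the argument goes through unchanged.
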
	

\end{mdframed}

\subsection{General boundary fluxes}
\label{GF}

The results can be extended to general boundary fluxes. Prominent examples are 
the Navier--slip boundary condition for the velocity, 
\begin{equation} \label{AP7}
\vu \cdot \vc{n}|_{\Gamma^\vu_N} = 0, 
\ \left[ \mathbb{S}(\vt, \Grad \vu) \cdot \vc{n} + d \vu \right] \times \vc{n}|_{\Gamma^\vu_N} = 0, \ d > 0,	
	\end{equation} 
see e.g. Bul{\' \i}{\v c}ek,  M{\' a}lek, and Rajagopal \cite{BMR}, 
and the radiative heat flux condition 
\begin{equation} \label{AP8}
\vc{q} (\vt, \Grad \vt) \cdot \vc{n} - d |\vt - \vt_0|^k (\vt - \vt_0)|_{\Gamma^\vt_N} = 0 	
	\end{equation}
used in some models in astrophysics, cf. Weiss and Proctor \cite{Weiss}. 

If $d > 0$, then the conditions $\Gamma^\vu_D \ne \emptyset$, $\Gamma^\vt_D \ne \emptyset$ can be dropped in 
Theorem \ref{TAS1}. Hypothesis $\Gamma^\vB_D \subset \Gamma^\vu_D$, however, is still necessary. 
We leave the details to the interested reader.

\subsection{``Do nothing'' boundary conditions}
\label{DN} 

Another interesting type of boundary conditions arises under a simple but physically grounded assumption 
that the Maxwell equations, here represented by the induction equation \eqref{p3}, are satisfied both in 
$\Omega$ and in the adjacent component $\Omega_{\rm ext}$ 
with possibly different (discontinuous) transport coefficients. The boundary conditions on $\Gamma \subset \partial \Omega$ separating $\Omega$ and $\Omega_{\rm ext}$ are then nothing other than the Rankine--Hugeniot compatibility conditions to ensure the satisfaction of the induction equation in the weak sense. 

More specifically, we write the Maxwell's equations yielding \eqref{p3} in a more precise way: 
\begin{align} \label{MAX}
\partial_t \vc{B} + \Curl \vc{E} &= 0, \br 
\Div \vc{B} &= 0, \br
\vc{J} &= \sigma \left( \vc{E} + \vu \times \vc{B} \right), \br
\vc{J} &= \Curl \vc{H},\ \vc{B} = \Ov{\mu} \vc{H},  	
	\end{align}  
where $\vc{E}$ denotes the electric field, $\vc{J}$ the electric current density, $\vc{H}$ the magnetic field intensity, $\sigma$ is the electric conductivity and $\Ov{\mu}$ the magnetic permitivity of the material. 
Both $\sigma$ and $\Ov{\mu}$ may experience jumps over $\Gamma$.
Accordingly, the Lorentz force in the momentum equation reads 
\[
\vc{J} \times \vc{B} = \Curl \vc{H} \times \vc{B}.
\]

The associated 
Rankine-Hugeniot conditions on $\Gamma$ are:
\begin{align} \label{RH}
	\vc{B} \cdot \vc{n} &= \vc{B}_{\rm ext} \cdot \vc{n}, \br
	\vc{H} \times \vc{n} &= \vc{H}_{\rm ext} \times \vc{n}, \br 
	\vc{E} \times \vc{n} &= \vc{E}_{\rm ext} \times \vc{n}.
\end{align}
If the adjacent component $\Omega_{\rm ext}$ happens to be a vacuum, we have $\sigma_{\rm ext} = 0$, and 
\eqref{MAX}, \eqref{RH} give rise to a non--local condition for $\vc{B}$ on $\Gamma$ given by a Neumann--Dirichlet 
type map, see Ladyzhenskaya and Solonnikov \cite{LadSol}, Sakhaev and Solonnikov \cite{SakSol}, Solonnikov \cite{Solo}.  

At the level of the energy estimates yielding the existence of a bounded absorbing set, the above situation 
does not create any additional difficulties. Indeed, in view of \eqref{RH}, the function 
\[
\Ov{\mu} ( \vc{B} - \vc{B}_B) =  
\vc{H} - \vc{H}_B,\ \Ov{\mu} = \Ov{\mu}_{\rm int} \mathds{1}_\Omega + \Ov{\mu}_{\rm ext} \mathds{1}_{\Omega_{\rm ext}}
\] 
is still a legitimate test function for the induction equation \eqref{p3} yielding
\begin{align} \label{BAL}
	\frac{\D }{\dt} \int_{\Omega \cup \Omega_{\rm ext}} \Ov{\mu} \left( \frac{1}{2} |\vc{B}|^2 - \bB \cdot \vc{B} \right) \dx &=  \int_{\Omega \cup \Omega_{\rm ext}} (\vu \times \vc{B})  \cdot \Curl (\vc{H} - \vc{H}_B) \dx \br &+ \int_{\Omega \cup \Omega_{\rm ext}} \frac{1}{\sigma} \Curl \vc{H} \cdot \Curl (\vc{H} - \vc{H}_B) \dx.
	\end{align}
If we set the velocity $\vu_{\rm ext} = 0$, all energy bounds derived in the proof of Theorem \ref{TAS1} remain valid; whence the existence of a bounded absorbing set follows from Theorem \ref{TAS1}

The proof of asymptotic compactness claimed in Theorem \ref{TAC1} may be more delicate. Still we conjecture that the results of Bauer, Pauly, and Schomburg \cite{BaPaSc} may be used to show the asymptotic compactness 
for the sequence of magnetic fields to extend Theorem \ref{TAC1} to the case of ``do nothing'' boundary conditions. 

\bigskip

\centerline{\bf Acknowledgement}

The work was done during the stay of E.Feireisl at the Faculty of Arts and Science of Kyushu University, Fukuoka,
Japan, 
which support is gladly acknowledged.

\bigskip



\end{document}